\documentclass[leqno,11pt]{amsart}

\usepackage{setspace}
\usepackage[english]{babel}
\usepackage{bussproofs}
\usepackage{csquotes}
\usepackage{mathtools}
\usepackage{mathrsfs}
\usepackage{latexsym}
\usepackage{enumitem}
\usepackage{amsthm}
\usepackage{amssymb}
\DeclareMathAlphabet{\mathpzc}{OT1}{pzc}{m}{it}
\usepackage[latin1]{inputenc}
\usepackage{afterpage}

\usepackage{bbm}
%\usepackage[rgb,dvipsnames]{xcolor}
%\usepackage{tikz} % for diagrams and drawing
%\usetikzlibrary{decorations.text}
%\usetikzlibrary{arrows,arrows.meta,hobby}
%\usetikzlibrary{shapes.misc, positioning}

%\usepackage{tikz-cd}

%\usetikzlibrary{cd}

%%%% smaller margins %%%%
\addtolength{\textwidth}{3.5cm}
\addtolength{\oddsidemargin}{-1.75cm}
\addtolength{\evensidemargin}{-1.75cm}

%%%%% smaller margins %%%%

% Theorem Styles
\newtheorem{theorem}{Theorem}[section]
\newtheorem*{theorem*}{Theorem}

\newtheorem{lemma}[theorem]{Lemma}

\newtheorem{corollary}[theorem]{Corollary}

\newtheorem{fact}[theorem]{Fact}
\newtheorem{claim}[theorem]{Claim}
% Definition Styles
\theoremstyle{definition}
\newtheorem{definition}[theorem]{Definition}

\theoremstyle{remark}

\newtheorem{notation}{Notation}
\newtheorem{question}{Question}

\def\hook{\upharpoonright}
\def\forces{\Vdash}

\def\Me{\mathcal M}

\def\ZFC{\mathsf{ZFC}}

\def\baire{\omega^\omega}

\def\GCH {\mathsf{GCH}}
\def\CH {\mathsf{CH}}
\def\Q{\mathbb Q}
\def\P{\mathbb P}

\def\mfa{\mathfrak{a}}

\def\mfi{\mathfrak{i}}

\def\cof{{\rm cof}}

\def\mfu{\mathfrak{u}}

\usepackage[margin=1.3in]{geometry}

\begin{document}

\title{Cohen Preservation and Independence}
\author[Fischer]{Vera Fischer}
\address[V. ~Fischer]{Institut f\"{u}r Mathematik, Kurt G\"odel Research Center, Universit\"{a}t Wien, Kolingasse 14-16, 1090 Wien, AUSTRIA}
\email{vera.fischer@univie.ac.at}

\author[Switzer]{Corey Bacal Switzer}
\address[C.~B.~Switzer]{Institut f\"{u}r Mathematik, Kurt G\"odel Research Center, Universit\"{a}t Wien, Kolingasse 14-16, 1090 Wien, AUSTRIA}
\email{corey.bacal.switzer@univie.ac.at}

\thanks{\emph{Acknowledgements:} The authors would like to thank the
Austrian Science Fund (FWF) for the generous support through grant number Y1012-N35.}
\subjclass[2000]{03E17, 03E35, 03E50} 
\keywords{selective independent families; Cohen preservation; preservation of independent families; cardinal characteristics}

\date{}

\maketitle

\begin{abstract}
We provide a general preservation theorem for preserving selective independent families along countable support iterations. The theorem gives a general framework for a number of results in the literature concerning models in which the independence number $\mathfrak{i}$ is strictly below  $\mathfrak{c}$, including iterations of Sacks forcing, Miller partition forcing, $h$-perfect tree forcings, coding with perfect trees. Moreover, applying the theorem, we show that $\mathfrak{i} = \aleph_1$ in the Miller Lite model. An important aspect of the preservation theorem is the notion of ``Cohen preservation", which we discuss in detail.
\end{abstract}

\section{Introduction}
A family $\mathcal I \subseteq [\omega]^\omega$ is said to be {\em independent} if for all finite subsets $X_0, ..., X_{n-1} \in \mathcal I$ and all $g:n \to 2$ we have that $X^{g(0)}_0 \cap ... \cap X^{g(n-1)}_{n-1}$ is infinite where $X_i^{g(i)}$ means $X_i$ if $g(i) = 1$ and $\omega \setminus X_i$ if $g(i) = 0$. Such a set is said to be {\em maximal} if it is not properly contained in any other independent set. The {\em independence number} $\mfi$ is the least size of a maximal independent family. The cardinal characteristics $\mfi$ is difficult to control and there remain many interesting, long standing open questions, like the consistency of $\mfi < \mfa$ where $\mfa$ is the almost disjointness number\footnote{See the appendix of \cite{CFGS21} for a discussion of this problem.}, or the consistency of $\hbox{cof}(\mfi)=\omega$. While $\mfi$ has no known upper bound, besides $\mathfrak{c}=2^{\aleph_0}$, both the reaping number $\mathfrak{r}$ and the dominating numbers $\mathfrak{d}$
(see~\cite{BlassHB} for definitions) are $\ZFC$ provably less than or equal to $\mfi$. Thus preserving $\mfi$ small requires preserving the smallness of several other cardinal characteristics simultaneously.

One of the first breakthroughs in studying $\mfi$ can be found in \cite{Sh92} where the consistency of $\mfi < \mfu$ is established\footnote{The cardinal $\mfu$, the {\em ultrafilter number} is the least size of an ultrafilter base on $\omega$.}. There, under $\CH$, a special independent family, now known as a {\em selective} independent family is constructed and used as a witness to $\mathfrak{i}=\aleph_1$. A selective independent family (see Definition~\ref{def.selective.independent}) is a densely maximal independent family (see Definition~\ref{densemax}), with the property that a naturally associated filter to the family, namely its density filter (see Definition~\ref{density.filter}), is Ramsey.  

Selective independence is one of the main tools in providing models of $\mfi < 2^{\aleph_0}$, as selective independent families are indestructible under the countable support iterations of: Sacks forcing (see~\cite[Theorem 4.6]{DefMIF} or \cite[Corollary 37]{FM19}); Miller partition forcing (see~\cite{CFGS21}); $h$-perfect tree forcing (see~\cite{CS}); Shelah's poset $\mathbb{Q}_\mathcal{I}$ from~\cite{Sh92}; coding with perfect trees as in~\cite{BFS21}. In each of those cases the proof of preservation of selectivity runs along similar lines: One relies on the Sacks property, or on a property called {\em Cohen preservation} (which follows from the Sacks property, see Lemma~\ref{Sacks.implies.Cohen} or~\cite[Proposition 14]{free_seq}) alongside $\baire$-boundedness, to ensure the  preservation at the limit stages via a  preservation result of Shelah (see Theorem~\ref{Shelah_preservation1}), while a separate and partial order  specific argument is given at successor stages. The proofs at the successor stages seem just different enough, that it is not quite clear how to unify them into a single, succinct preservation theorem. 

Recall that a partial order is said to be Cohen preserving, if every new dense open subset of $2^{<\omega}$ contains an old dense subset (see Definition~\ref{def.cohen.preservation}). It turns out that the use of Cohen preservation alongside $\baire$-boundedness is somewhat illusory as, as we show below, Cohen preservation in fact implies $\baire$-boundedness (at least for proper forcing notions or under certain cardinal characteristic constellations, see Theorem~\ref{thm2}):

\begin{theorem*}
	If $\P$ is a proper Cohen preserving forcing notion, then $\P$ is $\baire$-bounding.
	\label{thm2}
\end{theorem*}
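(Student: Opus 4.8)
My plan is to prove the contrapositive: assuming $\P$ is \emph{not} $\baire$-bounding, I will build a $\P$-name for a dense open subset of $2^{<\omega}$ that is forced to contain no ground-model dense set, contradicting Cohen preservation. (Interestingly, properness enters only lightly, to tidy up the first step; the combinatorial core needs no hypothesis on $\P$.)

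\emph{Step 1: extract an ``escaping'' real.} Since $\P$ is not $\baire$-bounding, by unwinding the definition there are a condition $p^*\in\P$ and a $\P$-name $\dot h$ with $p^*\Vdash$ ``$\dot h\in\baire$'' and $p^*\Vdash\forall g\in\check V\cap\baire\ \exists^\infty n\ \dot h(n)>g(n)$. Replacing $\dot h$ by the name $n\mapsto n+\max\{\dot h(0),\dots,\dot h(n)\}$ we obtain a name $\dot f$ with $p^*\Vdash$ ``$\dot f$ is strictly increasing'' and $p^*\Vdash\forall g\in\check V\cap\baire\ \exists^\infty n\ \dot f(n)>g(n)$.

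\emph{Step 2: the bad dense open set.} For $t\in 2^{<\omega}$ not identically $0$ let $k(t)$ denote the position of the first $1$ in $t$. Working below $p^*$, I let $\dot D$ be the name for
$\{t\in 2^{<\omega} : t\text{ is not all-}0\text{ and }|t|\ge k(t)+1+\dot f(k(t))\}$,
and I extend $\dot D$ to a name defined on all of $\P$ by putting $\dot D=2^{<\omega}$ off $p^*$. A routine verification shows $\P$ forces $\dot D$ to be a dense open (i.e.\ upward-closed) subset of $2^{<\omega}$: upward closure holds because extending a string neither removes its first $1$ nor shortens it; and density is witnessed by padding any string with a block of $0$'s (preceded by a single $1$ if the string is all-$0$) long enough to meet the length requirement.

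\emph{Step 3: derive a ground-model bound, and conclude.} The key point is that any dense $E\in V$ with $\Vdash E\subseteq\dot D$ would have to dominate $\dot f$ in $V$. Set $g(n):=\min\{|t| : t\in E,\ t\supseteq 0^n\}$; by absoluteness of ``$E$ is dense'' and of the shortest-extension-length function, $g\in V\cap\baire$, and $\tilde g(n):=g(n)-n-1$ also lies in $V\cap\baire$. If $t\in E$ extends $0^n$ then $t\in\dot D$, so $t$ is not all-$0$, its first $1$ sits at some position $k(t)\ge n$, and hence $|t|\ge k(t)+1+\dot f(k(t))\ge n+1+\dot f(n)$ using that $\dot f$ is increasing; therefore $\Vdash\forall n\ \dot f(n)\le\tilde g(n)$ below $p^*$. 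Now apply Cohen preservation below $p^*$: there are $q\le p^*$ and a dense $E\in V$ with $q\Vdash E\subseteq\dot D$, whence $q\Vdash\dot f\le^*\tilde g$, contradicting $q\le p^*\Vdash\exists^\infty n\ \dot f(n)>\tilde g(n)$. So no such $E$ exists, $\dot D$ witnesses the failure of Cohen preservation, and the hypothesis is contradicted.

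The main obstacle is Step 2: one must design $\dot D$ so that it is simultaneously dense, open, and ``slow'' along the all-zero nodes $0^n$ with a margin controlled by $\dot f$, precisely so that the ground-model dense subset handed back by Cohen preservation is forced to yield a ground-model function dominating $\dot f$. Once the combinatorics of $\dot D$ are set up, the remainder is bookkeeping, together with the standard facts that a ground-model dense subset of $2^{<\omega}$ stays dense in the extension and that the relevant length functions are absolute.
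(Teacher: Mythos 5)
Your proof is correct, and it takes a genuinely different route from the paper's. The paper obtains this theorem as an immediate corollary of its Theorem~\ref{Cohenequiv}: Cohen preservation is first upgraded to the statement that every new Borel nowhere dense set is contained in a ground-model one, then (using the countable covering property of proper forcings) to the analogous statement for Borel meager sets, and finally $\baire$-bounding is extracted via the Pawlikowski--Rec\l aw continuous coding functions $e,f$ with $M_{e(x)}\subseteq M_y\Rightarrow x\leq^* f(y)$. You instead argue directly: an unbounded name $\dot f$ is hard-coded into a dense open subset of $2^{<\omega}$ along the all-zero nodes, and the ground-model dense refinement supplied by Cohen preservation is read back as a ground-model dominating function. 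Your approach is more elementary and self-contained, and it buys slightly more: properness is in fact used nowhere (not even in your Step 1, where unwinding the failure of bounding needs no hypothesis on $\P$), so you prove that Cohen preservation alone, as defined in Definition~\ref{def.cohen.preservation}, implies $\baire$-bounding; the paper's route, by contrast, yields the full package of equivalences of Theorem~\ref{Cohenequiv}, which the rest of the paper also uses. Two cosmetic points in your write-up: $\tilde g(n)=g(n)-n-1$ is a priori only seen to be nonnegative once one notes that $q\Vdash \check E\subseteq\dot D$ forces $E$ to contain no all-zero string (so every $t\in E$ extending $0^n$ has $|t|\geq n+1$), or one can simply truncate $\tilde g$ at $0$; and the contradiction at the end uses $\dot f(k(t))\geq\dot f(n)$, which requires only that $\dot f$ be nondecreasing, which your replacement in Step 1 guarantees.
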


Moreover, we establish the following general preservation theorem (see Theorem~\ref{mainthm11}), providing in particular a uniform framework for the preservation results listed above, as well as any countable support mixture of the above mentioned posets (see~Fact~\ref{fact.all}):

\begin{theorem*}
($\CH$) Let $\delta$ be an ordinal, $\mathcal I$ a selective independent family and let $\langle \P_{\alpha}, \dot{\Q}_\alpha\; | \; \alpha < \delta\rangle$ be a countable support iteration of forcing notions so that for each $\alpha < \delta$ we have $$\forces_\alpha``\dot{\Q}_\alpha\hbox{ is proper and Cohen preserving}".$$ 
If for every $\alpha < \delta$, 
$$\forces_\alpha``\dot{\Q}_\alpha\hbox{ preserves the dense maximality of }\mathcal I",$$ 
then $\P_\delta$ preserves that $\mathcal I$ is selective and in particular maximal.
%\label{mainthm1}
\end{theorem*}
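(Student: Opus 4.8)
The plan is to induct on $\delta$, after reducing the conclusion to two preservation tasks. By Definition~\ref{def.selective.independent}, ``$\mathcal I$ is selective'' means that $\mathcal I$ is densely maximal (Definition~\ref{densemax}) and that its density filter $\mathrm{fil}(\mathcal I)$ (Definition~\ref{density.filter}) is Ramsey; moreover dense maximality already implies maximality, since for $A\notin\mathcal I$, applying dense maximality at the empty condition yields a Boolean combination $X_h$ of members of $\mathcal I$ with $X_h\subseteq^* A$ or $X_h\cap A$ finite, and either alternative destroys the independence of $\mathcal I\cup\{A\}$. So it suffices to show that $\P_\delta$ preserves (a) the dense maximality of $\mathcal I$, and (b) that $\mathrm{fil}(\mathcal I)$ is Ramsey. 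The case $\delta=0$ is trivial, and properness of each $\P_\alpha$ is handed to us by the standard iteration theorem for proper forcing.

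For the successor step $\delta=\alpha+1$, I would work in $V^{\P_\alpha}$, where the induction hypothesis gives that $\mathcal I$ is still selective. There $\dot\Q_\alpha$ is proper and Cohen preserving, hence $\baire$-bounding by Theorem~\ref{thm2}, and it preserves dense maximality of $\mathcal I$ by hypothesis, so (a) holds for $\P_{\alpha+1}$. For (b) I would isolate and prove the one-step statement that a proper, Cohen preserving forcing which preserves the dense maximality of a selective family $\mathcal I$ also preserves that $\mathrm{fil}(\mathcal I)$ is Ramsey. Given in the extension a name $\dot f$ for, say, a finite-to-one function on $\omega$ (the partition formulation being analogous), use $\baire$-bounding to dominate $\dot f$ by a ground model function, and then build in the ground model a single $B\in\mathrm{fil}(\mathcal I)$ meeting all requirements against $\dot f$ by a recursion of length $\omega_1$: enumerating the finite conditions $h$ over $\mathcal I$ (there are $\aleph_1$ of them by $\CH$), at stage $\xi$ extend $h_\xi$ to some $h'_\xi$ with $X_{h'_\xi}\subseteq^* B$ so as both to maintain that the eventual $B$ will lie in $\mathrm{fil}(\mathcal I)$ and to meet the current $\dot f$-demand. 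The decisive use of Cohen preservation is that the set of finite approximations to $B$ that $\dot f$ permits forms a \emph{new} dense open subset of $2^{<\omega}$, hence contains an \emph{old} dense subset by Definition~\ref{def.cohen.preservation}, so it is enough to diagonalize against a ground model requirement, which the selectivity of $\mathcal I$ in $V$ makes possible.

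For limit $\delta$, the two cases are handled by standard iteration technology. If $\mathrm{cf}(\delta)>\omega$, no new real is added at stage $\delta$, so a witness to a failure of (a) or (b) — each coded by a single real — already lives in some $V^{\P_\alpha}$ with $\alpha<\delta$, and the induction hypothesis applies. If $\mathrm{cf}(\delta)=\omega$, I would invoke Shelah's preservation theorem, Theorem~\ref{Shelah_preservation1}, whose hypotheses are met here: by the induction hypothesis the selectivity of $\mathcal I$ — that is, (a) and (b) together — is preserved along every proper initial segment of the iteration, and every iterand is proper, $\baire$-bounding (Theorem~\ref{thm2}) and Cohen preserving.

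I expect the main difficulty to be twofold. In the successor step, the Ramsey property of $\mathrm{fil}(\mathcal I)$ bundles a P-filter clause with a selector clause, and the single $\omega_1$-recursion producing $B$ must serve both at once while staying compatible with the reduction of the $\dot f$-requirement to a ground model one supplied by Cohen preservation. And in the countable-cofinality limit step, the real work is checking that the concrete package ``Cohen preserving $+$ $\baire$-bounding $+$ preservation of dense maximality of $\mathcal I$'' genuinely meets the abstract hypotheses of Theorem~\ref{Shelah_preservation1}; this is the point of the paper, since it is exactly what lets Cohen preservation stand in for the Sacks property used in the individual preservation arguments in the literature (cf.\ Lemma~\ref{Sacks.implies.Cohen}).
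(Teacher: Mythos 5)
Your overall architecture (induct on $\delta$; successors give dense maximality by hypothesis; preserve Ramseyness of $\mathrm{fil}(\mathcal I)$ separately; limits via Theorem~\ref{Shelah_preservation1}) matches the paper's, but there is a genuine gap at exactly the two places you yourself flag as ``the real work'', and it is the same gap both times: you are missing the lemma that, after forcing with a proper Cohen preserving poset, $\mathrm{id}(\mathcal I)$ (dually $\mathrm{fil}(\mathcal I)$) is \emph{generated by its ground model elements} (Lemma~\ref{idealpreserving} in the paper). Its proof is the decisive use of Cohen preservation: given a name $\dot X$ forced into $\mathrm{id}(\mathcal I)$, properness yields a countable $\mathcal J\subseteq\mathcal I$ with $\dot X\in\mathrm{id}(\mathcal J)$; identifying $\mathrm{FF}(\mathcal J)$ with $2^{<\omega}$, the set $\dot D=\{g \mid \mathcal J^g\cap\dot X=\emptyset\}$ is a name for a dense open subset of $2^{<\omega}$, so Cohen preservation gives a ground model dense $E$ forced into $\dot D$, and $Y=\bigcap_{h\in E}(\omega\setminus\mathcal J^h)$ is a ground model member of the ideal forced to cover $\dot X$.

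Without this lemma neither of your two delicate steps goes through as described. For the successor step, the proposed $\omega_1$-recursion ``building $B\in\mathrm{fil}(\mathcal I)$'' is not a preservation argument: $\mathrm{fil}(\mathcal I)$ is a fixed object determined by $\mathcal I$, and the claim that ``the set of finite approximations to $B$ that $\dot f$ permits forms a new dense open subset of $2^{<\omega}$'' is never substantiated. The correct route is that the P-set property transfers from $V$ because every countable family of names for filter sets has ground model filter sets almost below them (the lemma plus countable covering), while the Q-set property follows from $\baire$-bounding alone; in particular Ramsey preservation needs no appeal to preservation of dense maximality. For the limit step, Theorem~\ref{Shelah_preservation1} requires a \emph{fixed ground model} Ramsey set $\mathcal F$ with $V^{\P_\alpha}\vDash P(\omega)=\langle\mathcal F\rangle_{\mathrm{up}}\cup\langle\mathcal H\rangle_{\mathrm{dn}}$ for all $\alpha<\delta$; dense maximality at stage $\alpha$ only yields this with $\mathcal F=\mathrm{fil}(\mathcal I)$ as computed in $V^{\P_\alpha}$ (Lemma~\ref{lemma0}), a set that a priori grows with $\alpha$. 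It is precisely Lemma~\ref{idealpreserving} that lets one take $\mathcal F=\mathrm{fil}(\mathcal I)\cap V$ uniformly in $\alpha$, which is what makes Shelah's hypotheses verifiable. (Your case split on $\mathrm{cf}(\delta)$ is then unnecessary, though the $\mathrm{cf}(\delta)>\omega$ observation is essentially correct.)
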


Apart from offering this general preservation theorem, as an application, we obtain the following new cardinal characteristics constellation, see Theorem~\ref{millerlite2} and Corollary~\ref{millerlite2.corollary}:

\begin{theorem*}\label{millerlite1}
Miller lite forcing, $\mathbb{ML}$, preserves dense maximality of  selective independent families, it is Cohen preserving and proper. Thus $$\mfi = \mathfrak{hm} < \mathfrak{l}_{n, \omega}$$ is consistent and moreover the witness for $\mfi$ can be taken to be co-analytic.
\end{theorem*}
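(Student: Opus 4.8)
The plan is to establish, in turn, that $\mathbb{ML}$ is proper, Cohen preserving, and preserves the dense maximality of a selective independent family, and then to feed these facts into Theorem~\ref{mainthm11} together with a standard iteration argument.

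\textbf{Properness and Cohen preservation.} We would use the standard presentation of $\mathbb{ML}$ as a forcing whose conditions are finitely branching trees equipped with a fusion order $\leq_n$ recording agreement on the first $n$ splitting levels. Properness, and in fact $\baire$-bounding, then follow from the usual fusion argument, since a fused condition bounds any prescribed name for an element of $\baire$. Unlike Sacks forcing, $\mathbb{ML}$ is not expected to have the Sacks property, so Lemma~\ref{Sacks.implies.Cohen} is not available and Cohen preservation must be proved directly: given $p\in\mathbb{ML}$ and a name $\dot D$ for a dense open subset of $2^{<\omega}$, we would build a fusion sequence $\langle p_n\mid n<\omega\rangle$ below $p$ so that, fixing an enumeration $\langle s_n\mid n<\omega\rangle$ of $2^{<\omega}$, at stage $n$ each of the finitely many subtrees of $p_n$ rooted at its $n$-th splitting level decides some $t\supseteq s_n$ with $t\in\dot D$. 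Setting $q=\bigcap_n p_n$ and letting $E$ be the upward closure under end-extension, computed in the ground model, of all the $t$'s chosen along the construction, one checks that $E$ is a ground-model dense open subset of $2^{<\omega}$, that $q\in\mathbb{ML}$, and that $q\Vdash\check E\subseteq\dot D$; hence $\mathbb{ML}$ is Cohen preserving.

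\textbf{Preservation of dense maximality.} This is the successor-step core of the theorem and, as in all the earlier treatments, the main obstacle. Fix, under $\CH$, a selective independent family $\mathcal I$, a condition $p\in\mathbb{ML}$, a name $\dot Y$ for an infinite subset of $\omega$, and a finite Boolean combination $I_h$ of members of $\mathcal I$ with $I_h$ infinite. Using $\baire$-bounding (which we have, either directly from the tree structure or via Theorem~\ref{thm2}) together with a fusion argument, we would thin $p$ to a condition $q$ and extend $h$ so that $q$ reads continuously, along its splitting levels, a ground-model $\subseteq^{*}$-decreasing sequence of infinite Boolean combinations contained in $I_h$; the Ramseyness of the density filter of $\mathcal I$ then lets us diagonalize this sequence to a single Boolean combination $I_{h'}$, still infinite, with $q\Vdash I_{h'}\subseteq^{*}\dot Y$ or $q\Vdash I_{h'}\subseteq^{*}\omega\setminus\dot Y$, which is precisely what preservation of dense maximality requires. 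The skeleton parallels the Sacks, Miller-partition and $h$-perfect tree cases of \cite{DefMIF,FM19,CFGS21,CS}; the new ingredient is the bookkeeping adapted to the splitting structure of $\mathbb{ML}$, and making that bookkeeping cooperate with the continuous reading is where the real work lies.

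\textbf{The consistency statement and the co-analytic witness.} Granting the three properties, Theorem~\ref{mainthm11} applied to the countable support iteration of $\mathbb{ML}$ of length $\aleph_2$ over a model of $\CH$ produces a model in which the ground-model selective independent family stays maximal, so $\mfi=\aleph_1$; in that model one moreover has $\mathfrak{hm}=\aleph_1$ and $\mathfrak{l}_{n,\omega}=\mfc=\aleph_2$ (Theorem~\ref{millerlite2} and Corollary~\ref{millerlite2.corollary}), so $\mfi=\mathfrak{hm}<\mathfrak{l}_{n,\omega}$. For the co-analytic witness we would instead start over $L$ from the $\Pi^1_1$ selective independent family and the $\Sigma^1_2$-good wellorder of \cite{DefMIF}, run the iteration with the matching bookkeeping, and verify that --- because $\mathbb{ML}$ is proper, $\baire$-bounding, Cohen preserving, and preserves dense maximality --- the set of reals satisfying the defining $\Pi^1_1$ formula in the extension is still exactly the original, now $\aleph_1$-sized, family and is still maximal. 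This last step is carried out just as in the Sacks and coding-with-perfect-trees treatments of \cite{DefMIF,BFS21}, whose assumptions on the forcings being iterated we have just verified for $\mathbb{ML}$.
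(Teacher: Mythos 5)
Your overall architecture (verify properness, Cohen preservation and preservation of dense maximality for $\mathbb{ML}$, feed these into Theorem~\ref{mainthm11}, and run the iteration over $L$ using the co-analytic family of \cite{DefMIF} for the definable witness) matches the paper, and your final assembly step is essentially identical to Theorem~\ref{thm.miller.lite}. But the core step --- preservation of dense maximality --- is where the actual content lies, and your sketch of it has a genuine gap. You propose to read continuously a ground-model $\subseteq^*$-decreasing sequence of Boolean combinations inside $I_h$ and then ``diagonalize'' it via Ramseyness to a single Boolean combination $I_{h'}$ with $q\Vdash I_{h'}\subseteq^*\dot Y$ or $q\Vdash I_{h'}\subseteq^*\omega\setminus\dot Y$. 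Diagonalizing a $\subseteq^*$-decreasing sequence against a Ramsey filter produces a pseudo-intersection, which is an element of ${\rm fil}(\mathcal I)$ but is in general \emph{not} of the form $\mathcal I^{h'}$ for any $h'\in{\rm FF}(\mathcal I)$; so the mechanism as stated cannot deliver the conclusion you claim. The argument that actually works (and is the one in the paper, following \cite{Sh92} and \cite{CFGS21}) goes through Lemma~\ref{lemma0}: one fixes a name $\dot X$ forced to satisfy $\dot X\nsubseteq\omega\setminus\mathcal I^g$ for all $g$, takes a condition $p$ preprocessed for $\dot X$, forms for each node $t$ of the fusion skeleton the set $Y_t$ of integers not excluded from $\dot X$ by $p_t$, checks $Y_t\in{\rm fil}(\mathcal I)$, uses the $P$-filter property to find a ground-model $C\in{\rm fil}(\mathcal I)$ with $C\subseteq^* Y_t$ for all $t$, uses the $Q$-filter property to thin $C$ to $C^*=\{k_n\}$ with $f(k_n)<k_{n+1}$ for a suitable $f$, and then fuses to a $q\leq p$ forcing $\check C^*\subseteq\dot X$, hence $\dot X\in{\rm fil}(\mathcal I)$. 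Your sketch names none of this and explicitly defers ``where the real work lies,'' so the main claim is not proved.

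Two smaller points. First, your premise that $\mathbb{ML}$ ``is not expected to have the Sacks property'' is false: it does have the Sacks property (\cite[Lemma 5.1]{OV18}), since conditions are finitely branching trees, and this is exactly how the paper obtains Cohen preservation via Lemma~\ref{Sacks.implies.Cohen}; no direct argument is needed. Second, your direct fusion argument for Cohen preservation, as written, has a flaw even granting the approach: if at stage $n$ each of the finitely many subtrees above the $n$-th splitting level decides its \emph{own} $t\supseteq s_n$ to lie in $\dot D$, then the fused condition $q$ does not force any particular such $t$ into $\dot D$, so $q\nVdash\check E\subseteq\dot D$ for the $E$ you build. The standard fix is to process the subtrees successively, extending a single $t$ so that at the end all subtrees force the final $t$ into $\dot D$ (using that $\dot D$ is open); this is the content of \cite[Proposition 14]{free_seq}.
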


Miller Lite forcing is first investigated by Geschke in \cite{Ges06}, in relation to continuous colorings of the reals. The definition of the {\em localization numbers} $\mathfrak{l}_{n, \omega}$ and the cardinal $\mathfrak{hm}$ are omitted as it they are not used in this paper. See \cite{Ges06} for details.

The rest of the paper is organized as follows. In the next section we review the basics of Cohen preservation and prove a number of equivalent formulations which, in particular, imply Theorem~\ref{thm2}. In Section 3 we prove Theorem~\ref{mainthm11}. In section 4 we discuss applications and prove Theorem~\ref{millerlite2} and Corollary~\ref{millerlite2.corollary}. The final section includes some open questions and remarks. Our notation is mostly standard, we refer the reader to~\cite{BarJu95},~\cite{BlassHB} and~\cite{Hal17} for all undefined notions concerning forcing and cardinal characteristics. 

\section{Cohen Preservation}

Recall the notion of {\em Cohen preservation}. 

\begin{definition}\label{def.cohen.preservation}
Let $\P$ be a forcing notion. We say that $\P$ is {\em Cohen preserving} if every every new dense open subset of $2^{{<}\omega}$ (or, equivalently of $\omega^{{<}\omega}$) contains an old dense subset. More formally, $\P$ is Cohen preserving if for all $p \in \P$ and all $\P$-names $\dot{D}$ so that $p \forces$ ``$\dot{D} \subseteq2^{{<}\omega}$ is dense open" there is a dense $E \subseteq 2^{{<}\omega}$ in the ground model and a $q \leq_\P p$ so that $q \forces \check{E} \subseteq \dot{D}$.
\end{definition}

Another way of expressing the above is to say that every $\P$-name $\dot{D}$ for a dense (not necessarily open) subset of $2^{<\omega}$ and every $p \in \P$ there is a $q \leq p$ and a dense $E \in V$ so that $q$ forces that $E$ {\em refines} $\dot{D}$ i.e. for every $t \in \dot{D}$ there is an $s \supseteq t$ with $s \in E$. Being Cohen preserving is preserved by countable support iterations of proper forcing notions.

\begin{theorem}[Shelah, See Conclusion 2.15D, pg. 305 of \cite{PIP}, see also \cite{FM19}, Theorem 27]
If $\delta$ is an ordinal and $\langle \Q_\alpha, \dot{\mathbb R}_\alpha \; | \; \alpha < \delta\rangle$ is a countable support iteration of forcing notions so that for each $\alpha < \delta$ we have $\forces_\alpha$``$\dot{\mathbb R}_\alpha$ is proper and Cohen preserving" then $\Q_\delta$ is proper and Cohen preserving.
\label{Shelah_preservation2}
\end{theorem}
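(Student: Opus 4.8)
The plan is to argue by induction on $\delta$, since the properness half of the conclusion is the classical Shelah iteration theorem for proper forcing and so only the inheritance of Cohen preservation needs attention. As is usual for preservation theorems of this kind, one cannot simply induct on the bare assertion ``$\Q_\delta$ is Cohen preserving''; instead I would induct on a model-generic strengthening: for a large regular $\chi$, a countable $N \prec H(\chi)$ containing the iteration and the relevant names, an ordinal $\alpha < \delta$, a condition $p \in \Q_\delta$ with $p \hook \alpha$ being $(N,\Q_\alpha)$-generic and $p \hook [\alpha,\delta) \in N$, and a $\Q_\delta$-name $\dot D \in N$ for a dense open subset of $2^{<\omega}$, there should be $q \leq p$ with $q \hook \alpha = p \hook \alpha$, with $q$ being $(N,\Q_\delta)$-generic, and with a ground model dense $E \subseteq 2^{<\omega}$ such that $q \forces \check E \subseteq \dot D$ (the instance $\alpha = 0$, with the genericity clause dropped, being Cohen preservation of $\Q_\delta$). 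Two elementary remarks would be used repeatedly: any dense set lying below $\dot D$ may be replaced by its upward closure, which is dense open and still lies below $\dot D$ since $\dot D$ is open; and for dense open sets inclusion is transitive. This last point is what makes the successor step go through cleanly.

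At a successor $\delta = \gamma + 1$ I would compose the two Cohen-preservation witnesses: passing to $V^{\Q_\gamma}$, Cohen preservation of $\dot{\mathbb R}_\gamma$ yields (a name for) a dense open $\dot E' \subseteq 2^{<\omega}$ and an extension of the last coordinate of $p$ --- which, by properness of $\dot{\mathbb R}_\gamma$, may be taken generic over $N[\dot G_\gamma]$ --- forcing $\dot E' \subseteq \dot D$; then, $\dot E'$ being a $\Q_\gamma$-name for a dense open subset of $2^{<\omega}$, the induction hypothesis applied at $\gamma$ to $\dot E'$ provides an $(N,\Q_\gamma)$-generic condition below $p \hook \gamma$, agreeing with $p$ below $\alpha$, together with a ground model dense $E \subseteq \dot E' \subseteq \dot D$, and combining this with the extension of the last coordinate gives the required $(N,\Q_\delta)$-generic condition, since iterated genericity of a two-step extension follows from genericity of its two parts. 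At a limit $\delta$ of uncountable cofinality I would reflect the problem down: below a suitable $(N,\Q_\delta)$-generic condition the value of $\dot D$ depends only on $\dot G \cap N$, so since $N \cap \delta$ is bounded, say below $\alpha_0 < \delta$, the name $\dot D$ is equivalent there to a $\Q_{\alpha_0}$-name, and one finishes by the induction hypothesis at $\alpha_0$ followed by a generic extension over the proper tail $\Q_\delta / \Q_{\alpha_0}$.

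The hard part is the case $\operatorname{cf}(\delta) = \omega$, say $\delta = \sup_n \delta_n$. Here I would fix an enumeration $2^{<\omega} = \{t_n : n \in \omega\}$ and a suitable sequence $N_0, N_1, \dots$ of countable elementary submodels with $N_0 = N$, and build by a fusion a decreasing sequence $p = q_0 \geq q_1 \geq \cdots$ in $\Q_\delta$ with $q_{n+1} \hook \delta_n = q_n \hook \delta_n$, with each $q_{n+1}$ being $(N_{n+1}, \Q_\delta)$-generic, and with $q_{n+1}$ deciding some $s_n \supseteq t_n$ so that $q_{n+1} \forces \check s_n \in \dot D$. The fusion limit $q$ is then a condition, by the standard fusion lemma for countable support iterations of proper forcings; it lies below every $q_{n+1}$, so it forces $\check E \subseteq \dot D$ for $E := \{s_n : n \in \omega\}$, a set which is dense (as $s_n \supseteq t_n$) and lies in $V$, which is what we want. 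The genuine obstacle is the fusion step: working in $V^{\Q_{\delta_n}}$ below $q_n \hook \delta_n$ one must extend $q_n \hook [\delta_n,\delta)$ to a condition which is still generic over the relevant model for the tail iteration $\Q_\delta / \Q_{\delta_n}$ \emph{and} decides a node of $\dot D$ extending $t_n$ --- and this is an instance of exactly the statement being proved, for $\Q_\delta / \Q_{\delta_n}$, an iteration which may again have order type $\delta$. Dissolving this apparent circularity --- by organizing the whole construction as a single simultaneous induction in which the production of iterated-generic conditions and the decisions about $\dot D$ are interwoven --- is the crux, and it is precisely what Shelah's general preservation framework for proper countable support iterations delivers; in practice one checks that ``Cohen preservation'' is an instance of the abstract preservation scheme there, or simply invokes it (see \cite[Conclusion 2.15D]{PIP} and \cite[Theorem 27]{FM19}). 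So the main difficulty is not any individual step but the bookkeeping that makes this simultaneous induction close.
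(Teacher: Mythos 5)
The paper gives no proof of this theorem: it is quoted as a black box from Shelah's \emph{Proper and Improper Forcing} (Conclusion 2.15D) and from \cite{FM19}, so there is nothing internal to compare your argument against. Your sketch follows the standard template for such preservation theorems over countable support iterations (induction on a model--generic strengthening, composition of witnesses at successors, reflection at uncountable cofinality, fusion at countable cofinality) and correctly identifies the countable-cofinality step as the crux, which you---like the paper---ultimately discharge by citing the same sources; that is consistent with how the result is used here.
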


The notion is also closely related to the cardinal invariant $\cof(\Me)$, the cofinality of the meager ideal. It is for this reason that it is important in preserving small witnesses to $\mfi$ since $\cof(\Me) \leq \mfi$ in $\ZFC$ (see \cite[Theorem 3.6]{BHHH04}). 
A detailed proof of the following Lemma can be found in~\cite[Proposition 14]{free_seq}

\begin{lemma}\label{Sacks.implies.Cohen} If a forcing notion $\mathbb{P}$ has the Sacks property, then $\mathbb{P}$ is Cohen preserving.	
\end{lemma}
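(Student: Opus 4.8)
The plan is to combine the slalom formulation of the Sacks property with a normalization of the name for the dense open set, and then read off a ground-model dense set from the slalom. Throughout fix $p\in\P$ and a $\P$-name $\dot D$ with $p\forces$``$\dot D\subseteq 2^{<\omega}$ is dense open''.

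First I would record that the Sacks property implies that $\P$ is $\baire$-bounding: if $S\in V$ is a slalom with $|S(n)|\le n+1$ and some $q\le p$ forces $\forall n\,\dot f(n)\in S(n)$, then $n\mapsto\max S(n)$ dominates $\dot f$ below $q$. Using this I would normalize $\dot D$. Define (recursively in the extension) a strictly increasing $\dot h$ by $\dot h(0)=0$ and $\dot h(k+1)=$ the least $m$ such that every $\sigma\in 2^{\dot h(k)}$ has an extension of length $\le m$ in $\dot D$; this is total since $2^{\dot h(k)}$ is finite and $\dot D$ is dense. Choose $q_0\le p$ and a strictly increasing $F\in V$, $F(0)=0$, which dominates $\dot h$ below $q_0$ (and is closed off under enough iterations of a dominating function) so that $q_0$ forces: for every $k$ and every $\sigma\in 2^{F(k)}$ there is $\tau\supseteq\sigma$ with $|\tau|=F(k+1)$ and $\tau\in\dot D$ — here one uses openness of $\dot D$ to push a short extension of $\sigma$ up to level exactly $F(k+1)$. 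Replace $p$ by $q_0$; call this ``$\dot D$ is $F$-uniform''.

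Next, for each $k$ the trace $\dot E_k:=\dot D\cap 2^{F(k)}$ is a name for an element of the ground-model finite set $\mathcal P(2^{F(k)})$. Coding the initial segments $\langle\dot E_j:j\le k\rangle$ into a single function in $\baire$ (bounded by a ground-model function) and applying the slalom form of the Sacks property yields $q\le p$ and a ground-model slalom $\langle\hat S(k):k\in\omega\rangle$ with $|\hat S(k)|\le k+1$ and $q\forces\forall k\,\langle\dot E_j:j\le k\rangle\in\hat S(k)$. Pulling the $\hat S(k)$ back along the projection maps and intersecting produces a ground-model, finitely branching tree $\mathcal T$ of ``consistent trace approximations'' with at most $k+1$ nodes at each level $k$, such that $q\forces\langle\dot E_k\rangle\in[\mathcal T]$. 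After discarding from $\mathcal T$ every approximation that violates the structure forced onto the true trace (upward closure across the blocks $[F(k),F(k+1))$ by openness, and meeting every cone at every level $F(j)$, $j<k$, by $F$-uniformity), the true trace still lies in $[\mathcal T]$ and every branch of $\mathcal T$ is a coherent $F$-uniform trace, hence of the form ``$\bigcup_k A_k$ is dense in $V$''.

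The remaining step — which I expect to be the real obstacle — is to extract from $\mathcal T$ a single ground-model dense $E$ with $q\forces\check E\subseteq\dot D$. Since $q\forces E\cap 2^{F(k)}\subseteq\dot E_k$ is required while, a priori, $\dot E_k$ may be any surviving approximation at level $k$, the naive choice is to take $E\cap 2^{F(k)}=\bigcap\{A_k : A\in\mathcal T\text{ at level }k\}$; this is automatically forced inside $\dot D$, but it need not be dense, because distinct approximations can meet a cone through different nodes. Making the construction go through therefore requires a careful recursion over the levels $F(k)$ in which the $\le k+1$ approximations are ``merged'' correctly at each stage: one exploits both the skinniness $|\mathcal T\text{ at level }k|\le k+1$ — which is precisely what the Sacks property buys — and the coherence of the trace (openness of $\dot D$ propagates membership upward, so that covering a cone by a node already forced into $\dot D$ persists through all approximations at higher levels), and one has to organize the bookkeeping so that every branch of $\mathcal T$ is eventually ``caught'' and no cone is left uncovered. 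This is the technical heart of the argument, and a detailed treatment is given in~\cite{free_seq}, Proposition~14.
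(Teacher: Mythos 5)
Your setup (reduce to the traces $\dot D\cap 2^{F(k)}$ after an $F$-uniformity normalization via $\baire$-bounding, capture them by a ground-model slalom of width $k+1$, prune to a coherent finitely branching tree $\mathcal T$ of candidate traces) is exactly the right framework, and you have correctly diagnosed where the difficulty lies: the intersection of the candidates at each level is forced into $\dot D$ but need not be dense, while each individual branch is dense but you cannot know which branch is the true one. However, the step you defer to the reference is not a routine piece of bookkeeping that can be waved at --- it is the entire content of the lemma, and the recursion you gesture at does not terminate as described. The natural ``chasing'' argument (fix $\sigma$, pick $u_1\supseteq\sigma$ inside the first candidate, then use $F$-uniformity to extend $u_1$ into the second candidate at level $F(|u_1|)$, and so on, until every branch of $\mathcal T$ contains an initial segment of the final node, whence openness forces that node into $\dot D$) has the following problem: each handling step raises the current level by one application of $F$, and at the new level the width bound $k+1$ permits a new candidate to have split off, so one handles one approximation and gains one, making no net progress. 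The missing idea is quantitative: one must read off the slalom only along a sparse ground-model sequence of levels $N_k$ chosen \emph{after} $F$ so that $N_{k+1}$ exceeds the $(k+2)$-fold iterate of $F$ applied to $N_k$ (equivalently, make the slalom width grow far more slowly than the level index). Then within the window $[N_k,N_{k+1})$ at most one new candidate can appear while one has room for $k+2$ chasing steps, and the recursion closes. Without specifying this interplay between the width bound and the iterates of $F$, the proposal has a genuine gap precisely at its ``technical heart.''

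Two further remarks. First, the paper itself does not prove this lemma but cites \cite{free_seq}, Proposition 14, so your deferral mirrors the paper; still, as a proof attempt it is incomplete. Second, a shorter route compatible with this paper's toolkit would be: the Sacks property implies that every Borel meager set of the extension is contained in a ground-model-coded meager set (this is the classical slalom argument for preservation of a base of the meager ideal, see \cite{BarJu95}), which is condition $(3)$ of Theorem~\ref{Cohenequiv}, and hence yields Cohen preservation by that theorem; the advantage is that covering a \emph{meager} set only requires agreement with a chopped real on infinitely many blocks, for which a straightforward diagonalization over the $\leq k+1$ slalom entries suffices, sidestepping the delicate ``every cone'' extraction entirely.
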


Moreover, we obtain the following equivalent characterisations of Cohen preservation.

\begin{theorem}\label{Cohenequiv}
Let $\P$ be a proper forcing notion. The following are equivalent.
\begin{enumerate}
\item
$\P$ is Cohen preserving.
\item
If $\dot{A}$ is a $\P$-name, $p$ and condition and $p \forces$ ``$\dot{A}$ is a Borel nowhere dense set" then there is a $q \leq p$ and a Borel nowhere dense set $A_q$ so that $q \forces \dot{A} \subseteq \check{A}_q$ with $\check{A}_q$ the name for the reiterpretation of $A_q$ in the extension. 
\item
For all $\P$-names $\dot{A}$ and conditions $p$ so that $p \forces$ ``$\dot{A}$ is a Borel meager set" there is a $q \leq p$ and a Borel meager set $A_q$ so that $q \forces \dot{A} \subseteq \check{A}_q$.
\item
$\P$ is $\baire$-bounding and for every condition $p \in \P$, every sufficiently large $H_\theta$, every countable $M \prec H_\theta$ with $p, \P \in M$ and every $c \in 2^\omega$ which is Cohen over $M$, there is a $q_c \leq p$ forcing that $c$ is Cohen over $M[\dot{G}]$ where $\dot{G}$ is the canonical name for a $\P$-generic filter. %%need something about properness here
\end{enumerate}
\end{theorem}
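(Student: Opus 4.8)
The plan is to run the cycle $(1)\Rightarrow(2)\Rightarrow(3)\Rightarrow(4)\Rightarrow(1)$, using throughout the dictionary between dense open subsets of $2^{<\omega}$ and open dense subsets of $2^\omega$: a dense open $D\subseteq 2^{<\omega}$ gives the open dense set $\bigcup_{s\in D}[s]$, and a closed nowhere dense $K\subseteq 2^\omega$ gives the dense open set $\{s:[s]\cap K=\emptyset\}$. For $(1)\Rightarrow(2)$: if $p\forces$``$\dot A$ is Borel nowhere dense'', then $\dot D:=\{s\in 2^{<\omega}:[s]\cap\overline{\dot A}=\emptyset\}$ is forced dense open, so Cohen preservation gives $q\le p$ and a ground model dense $E$ with $q\forces\check E\subseteq\dot D$, whence $A_q:=2^\omega\setminus\bigcup_{s\in E}[s]$ is closed nowhere dense in $V$ and $q\forces\dot A\subseteq\overline{\dot A}\subseteq\check A_q$. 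For $(2)\Rightarrow(3)$ properness enters: given $p\forces$``$\dot A$ is Borel meager'', fix names $\langle\dot C_n:n<\omega\rangle$ definable from $\dot A$ for closed nowhere dense sets with $p\forces\dot A\subseteq\bigcup_n\dot C_n$; by $(2)$, each $\dot C_n$ is, densely below $p$, forced into a ground model Borel nowhere dense set. Choosing a countable $M\prec H_\theta$ with $p,\P,\dot A\in M$ and an $(M,\P)$-generic $q\le p$, reflection of $(2)$ into $M$ together with genericity forces each $\dot C_n$ into a member of the countable set $\mathcal B:=M\cap\{\text{Borel nowhere dense codes}\}$; since $M\in V$ we have $\mathcal B\in V$, so $B^*:=\bigcup\mathcal B$ is a ground model Borel meager set and $q\forces\dot A\subseteq\check B^*$.

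For $(3)\Rightarrow(4)$, first $\baire$-boundedness. Given $p\forces\dot f\in\baire$ I may assume $\dot f$ strictly increasing and apply $(3)$ to $\dot A:=\{x\in 2^\omega:\forall^\infty n\ \exists i\in[\dot f(n),\dot f(n+1))\ x(i)=1\}$, which is forced Borel meager, getting $q\le p$ and a ground model Borel meager $B$ with $q\forces\dot A\subseteq\check B$. By Bartoszy\'nski's combinatorial characterization of meager sets, in $V$ there are an increasing $a\in\baire$ and $z\in 2^\omega$ with $B\subseteq\{x:\forall^\infty n\ x\restriction[a_n,a_{n+1})\ne z\restriction[a_n,a_{n+1})\}$. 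Now $q$ must force the intervals $[\dot f(n),\dot f(n+1))$ not to grow far faster than the $a$-intervals: if some $q'\le q$ forced that $[\dot f(n),\dot f(n+1))$ contains three or more full $a$-blocks for infinitely many $n$, then in the extension one could build a real matching $z$ on one $a$-block inside each such $\dot f$-block and carrying a $1$ elsewhere in that $\dot f$-block, which would lie in $\dot A$ but not in $B$ — a contradiction. Hence $q$ forces that eventually $[\dot f(n),\dot f(n+1))$ contains at most two full $a$-blocks, and after passing to a condition deciding the relevant threshold one obtains $q'\forces\dot f\le^* g$ for an explicit $g\in V$ built from $a$. For the genericity clause: given a countable $M\prec H_\theta$ with $p,\P\in M$ and $c\in 2^\omega$ Cohen over $M$, an $(M,\P)$-generic $q_c\le p$ will, by reflecting $(3)$ into $M$, force every Borel meager set coded in $M[\dot G]$ into one coded in $M$; since $c$ avoids all meager sets coded in $M$, it avoids all those coded in $M[\dot G]$, i.e.\ $q_c\forces$``$c$ is Cohen over $M[\dot G]$''.

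For $(4)\Rightarrow(1)$: given $p\forces$``$\dot D\subseteq 2^{<\omega}$ is dense open'', fix a countable $M\prec H_\theta$ with $p,\P,\dot D\in M$, a real $c\in 2^\omega$ Cohen over $M$, and $q_c\le p$ forcing $c$ Cohen over $M[\dot G]$. In the extension, for every $t\in 2^{<\omega}$ the shift $t^\frown c$ is still Cohen over $M[\dot G]$, hence meets the dense set $\dot D$, so $\dot h(t):=\min\{l:t^\frown(c\restriction l)\in\dot D\}$ is a well-defined name in $\baire$. Using $\baire$-boundedness take $q'\le q_c$ and $g\in V$ with $q'\forces\dot h\le g$; then $E:=\{t^\frown(c\restriction g(t)):t\in 2^{<\omega}\}$ lies in $V$, is dense, and $q'\forces\check E\subseteq\dot D$ since $\dot D$ is upward closed.

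The step I expect to be the main obstacle is the $\baire$-boundedness half of $(3)\Rightarrow(4)$: extracting a ground model dominating function from the merely ``covering'' statement that new Borel meager sets are contained in old ones. This is precisely where Bartoszy\'nski's characterization of the meager ideal is indispensable, and it is also the substance of Theorem~\ref{thm2} (for proper $\P$, combined with $(1)\Rightarrow(3)$). The use of properness is confined to the two appeals to a countable elementary submodel with an $(M,\P)$-generic condition, in $(2)\Rightarrow(3)$ and in the genericity clause of $(3)\Rightarrow(4)$; without properness $(1)$ need not imply $(3)$, which is why it is assumed.
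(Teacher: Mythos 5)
Your cycle $(1)\Rightarrow(2)\Rightarrow(3)\Rightarrow(4)\Rightarrow(1)$ is the same as the paper's, and the implications $(1)\Rightarrow(2)$, $(2)\Rightarrow(3)$ and $(4)\Rightarrow(1)$ are essentially the arguments given there (your use of an $(M,\P)$-generic condition to collect countably many covering nowhere dense sets inside $M$ is just a spelled-out version of the paper's appeal to the countable covering property of proper forcing, and your shift $t^\frown c$ in place of the paper's $c^t$ is immaterial). The one genuinely different component is the $\baire$-bounding half of $(3)\Rightarrow(4)$: the paper simply cites Pawlikowski--Rec\l aw's Lemma 1.13, which packages into two continuous maps $e,f$ the fact that covering the meager set $M_{e(x)}$ forces domination of $x$, whereas you unwind that black box by hand -- you code the name $\dot f$ into the $F_\sigma$ meager set $\{x:\forall^\infty n\,\exists i\in[\dot f(n),\dot f(n+1))\ x(i)=1\}$, apply $(3)$, invoke Bartoszy\'nski's chopped-sequence characterization of meager sets in the ground model, and extract domination from the observation that each $\dot f$-interval can eventually contain at most two full $a$-blocks. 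This is correct (the ``three blocks'' contradiction and the resulting bound $\dot f(n)<a_{m_0+4(n-N_0)}$ both check out), and it is in substance exactly what the Pawlikowski--Rec\l aw functions encode; what your version buys is self-containedness at the cost of length, while the citation buys brevity. Your treatment of the second clause of $(4)$ via master conditions is arguably cleaner than the paper's contradiction argument. Both you and the paper elide the same standard point that $\le^*$-bounding can be upgraded to everywhere-bounding by deciding finitely many values, which is harmless.
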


\begin{proof}
We will show $(1)$ implies $(2)$, $(2)$ implies $(3)$, $(3)$ implies $(4)$ and $(4)$ implies $(1)$. The implication $(1)$ to $(2)$ follows from the observation that $D \subseteq 2^\omega$ is dense open in the standard topology on $2^\omega$ if and only if $D' =\{s \in 2^{<\omega} \; | \; [s] \subseteq D\}$ is dense in $2^{<\omega}$ (in the forcing sense). Let us begin with this point. First note that if $D$ is dense open, then for every $s \in 2^{<\omega}$ there is a $t \supseteq s$ so that $[t] \subseteq D$ since $D \cap [s] \neq \emptyset$ and, any element $x \in D$ is in an open neighborhood (and hence a basic open neighborhood) contained in $D$. Thus $D'$ is dense. Conversely, suppose $D'$ is dense. Observe that then $D = \bigcup_{s \in D'} [s]$. That $D \supseteq\bigcup_{s \in D'} [s]$ is by definition and the reverse inclusion follows from the fact that $D$ is open. But now, $\bigcup_{s \in D'} [s]$ is clearly dense open. 

Now suppose that $\P$ is Cohen preserving. If $\dot{A}$ names a Borel nowhere dense set, then by strengthening if necessary we can assume that $\dot{A}$ is closed nowhere dense since these are cofinal in the nowhere dense sets. Let $\dot{U}$ be the name for the dense open $2^\omega \setminus \dot{A}$. Then there is a name $\dot{U}'$ for the dense open subset of $2^{<\omega}$ as above and, by Cohen preservation there is in $V$ a $U_0'$ refining it. But then the dense open $U_0$ in $V$ is forced to be a subset of $\dot{U}$, so its complement contains $\dot{A}$ as needed.

To get from $(2)$ to $(3)$, let $\dot{A}$ name a Borel meager set. Then it is forced by some $p' \leq p$ that $\dot{A} \subseteq \bigcup_{n < \omega} \dot{C}_n$ for some name for a countable sequence of closed nowhere dense sets. By the countable covering property of properness plus $(2)$, we can find a family $\{D_n\}_{n < \omega}$ of nowhere dense closed sets and a $q \leq p'$ so that for every $n < \omega$ there is an $m < \omega$ with $q \forces \dot{C}_n \subseteq \check{D}_m$. It follows that $\bigcup_{n <\omega} D_n$ is the needed meager set.

Now assume $(3)$. First we show that $\P$ is $\baire$-bounding. This is essentially due to Pawlikowski and Rec\l aw, \cite[Lemma 1.13]{PR95}. In the proof of \cite[Lemma 1.13]{PR95} they show that there are ground model definable continuous functions $e, f:\baire \to \baire$ so that (provably in $\ZFC$) for all $x, y \in \baire$ we have that if $M_{e(x)} \subseteq M_y$ then $x \leq^* f(y)$, where for each $x \in \baire$ $M_x$ denotes the Borel meager set with code $x$ for some fixed coding (see \cite{PR95} for details). Now suppose that $\dot{x}$ is a $\P$-name for a real and work in some generic extension $V[G]$. Since item $(3)$ holds, we have that $M_{e(\dot{x}^G)} \subseteq M_y$ for some ground model $y$ and hence $\dot{x}^G \leq^* f(y)$. Since $f$ is a continuous function coded in the ground model, $f(y) \in V$, so $\P$ is $\baire$-bounding.

Still assuming $(3)$ we now show the second condition. Fix a condition $p \in \P$, a sufficiently large cardinal $\theta$ and a countable $M \prec H_\theta$ with $p, \P \in M$. Let $c \in 2^\omega$ be Cohen over $M$. If there is no $q \leq p$ forcing that $c$ is Cohen over $M[\dot{G}]$, then there is a $\P$-name $\dot{A}$ in $M$ for a Borel meager set and $p \forces \check{c} \in \dot{A}$. But by $(3)$ there is some $q \leq p$ and a ground model Borel meager set $A_q$ so that $q \forces \dot{A} \subseteq \check{A}_q$. Moreover, by elementarity such a $q$ and $Y$ can be found (coded) in $M$. It follows using Shoenfield absoluteness that $c \in A_q$ (in $M$ and in $V$). But this contradicts the fact that $c$ is Cohen over $M$.

Finally, we show that $(4)$ implies $(1)$. Let $\dot{D}$ name a dense open subset of $2^{<\omega}$ and let $p \in \P$ force this. Let $\theta$ be sufficiently large, $M \prec H_\theta$ countable with $p, \P, \dot{D} \in M$ and let $c$ be an arbitrary Cohen real over $M$. Note that for any $t \in 2^{<\omega}$ the real $c^t:= t^\frown c\hook[ln(t), \omega)$ is also Cohen over $M$. Let $q_c \leq p$ witness the second part of item $(4)$, i.e. $q_c$ forces that $c$ is Cohen over $M[\dot{G}]$. Let $q_c \in G$ be $V$-generic and temporarily work in $V[G]$. Since $q_c$ forced that $c^t$ was Cohen over $M[G]$ for every $t \in 2^{<\omega}$, in particular, we have that $\{c^t \hook k \; | \; k < \omega\} \cap \dot{D}^G \neq \emptyset$ for every $t \in 2^{<\omega}$. Let $f:2^{<\omega} \to \omega$ be a function (defined in $V[G]$) so that for every $t \in 2^{<\omega}$ we have $c^t\hook f(t) \in \dot{D}^G$. Note that since $\dot{D}^G$ is open if $g:2^{<\omega} \to \omega$ is some other function so that for all $t \in 2^{<\omega}$ $g(t) > f(t)$, then $c^t\hook g(t) \in \dot{D}^G$ also. By $\baire$-boundedness there is a function $g \in V$ so that $g:2^{<\omega} \to \omega$ and for all $t \in 2^{<\omega}$ we have $g(t) > f(t), ln(t)$. Fix such a $g \in V$ and let $q' \leq q_c$ in $G$ force this. Back in $V$ we have that $q ' \forces \forall t \in 2^{<\omega} \, \check{c}^t\hook g(t)\check{} \in \dot{D}$. In other words, $q' \forces \{c^t \hook g(t) \; | \; t \in 2^{<\omega}\} \subseteq \dot{D}$. But the set $\{c^t \hook g(t) \; | \; t \in 2^{<\omega}\}$ is dense (and in $V$) so we are done.
\end{proof}

The following is now immediate:

\begin{theorem}\label{thm2}
If $\P$ is Cohen preserving and proper then $\P$ is $\baire$-bounding.
\end{theorem}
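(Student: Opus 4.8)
The plan is to read the statement straight off Theorem~\ref{Cohenequiv}. Since $\P$ is assumed proper, that theorem applies, and ``Cohen preserving'' is precisely item~(1) in its list of equivalences; hence item~(4) holds as well, and the very first clause of item~(4) asserts that $\P$ is $\baire$-bounding. So, strictly speaking, nothing remains to be done beyond citing Theorem~\ref{Cohenequiv}.

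If one instead wants a direct argument that does not route through the full cycle of equivalences, I would isolate the relevant fragment of the proof of $(3)\Rightarrow(4)$, which is the Pawlikowski--Rec\l aw observation. Explicitly: Cohen preservation yields, via the already-established implications $(1)\Rightarrow(2)\Rightarrow(3)$, that for every $\P$-name $\dot A$ for a Borel meager set and every condition $p$ there are $q\le p$ and a ground-model Borel meager set $A_q$ with $q\forces\dot A\subseteq\check A_q$. Now let $\dot x$ be a $\P$-name for an element of $\baire$ and use the ground-model continuous functions $e,f\colon\baire\to\baire$ of \cite[Lemma 1.13]{PR95}, which satisfy (provably in $\ZFC$) that $M_{e(x)}\subseteq M_y$ implies $x\le^* f(y)$, where $M_z$ is the Borel meager set coded by $z$. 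Passing to a generic extension $V[G]$, the name for $M_{e(\dot x^G)}$ is covered by some ground-model $M_y$, so $\dot x^G\le^* f(y)$; and $f(y)\in V$ since $f$ is coded in $V$. Thus $\P$ is $\baire$-bounding.

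I do not expect a genuine obstacle here: the assertion is essentially a restatement of one clause of item~(4) of Theorem~\ref{Cohenequiv}, and all the actual work — in particular the invocation of the countable covering property of proper forcing in $(2)\Rightarrow(3)$, which is where properness is used — has already been carried out inside that proof. The only point worth flagging is precisely that properness is needed, and it enters exactly through the appeal to Theorem~\ref{Cohenequiv}.
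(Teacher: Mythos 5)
Your proposal is correct and matches the paper exactly: the paper states Theorem~\ref{thm2} as an immediate consequence of Theorem~\ref{Cohenequiv}, since Cohen preservation is item~(1) and $\baire$-bounding is the first clause of item~(4). Your additional extraction of the Pawlikowski--Rec\l aw step is a faithful restatement of the relevant fragment of the paper's own proof of $(3)\Rightarrow(4)$, so nothing further is needed.
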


%Moving forward when discussing Cohen preserving posets we will use the %fact that they are $\baire$-bounding without further discussion.

\section{The General Preservation Theorem}

In this section we prove Theorem \ref{mainthm11}. We start with recalling the notion of a {\em selective independent family}. The reader familiar with this idea, for example as presented in \cite{DefMIF}, \cite{FM19}, or \cite{CS} can comfortably skip this part and go straight to the proof of Theorem \ref{mainthm11}. Selective independent families were introduced in Shelah's proof of the consistency of $\mfi < \mfu$ in \cite{Sh92}. To facilitate the discussion we utilize the following notation.

\begin{notation} For $\mathcal{I}\subseteq [\omega]^{\omega}$,\begin{itemize}
\item let $\mathrm{FF}(\mathcal{I})$ denote the set of finite partial functions $g$ from $\mathcal{I}$ to $\{0,1\}$, and
\item for $g\in\mathrm{FF}(\mathcal{I})$  write $\mathcal{I}^g$ for $$\bigcap\{A\mid A\in\mathrm{dom}(g)\textnormal{ and }g(A)=1\}\cap\bigcap\{\omega\backslash A\mid A\in\mathrm{dom}(g)\textnormal{ and }g(A)=0\}.$$
\end{itemize}
\end{notation}
In the above notation, a family $\mathcal{I}\subseteq [\omega]^\omega$ is \emph{independent} if $\mathcal{I}^g$ is infinite for all $g\in\mathrm{FF}(\mathcal{I})$. An independent family $\mathcal{I}$ is \emph{maximal} if
$\forall X\in [\omega]^\omega\;\exists g\in\mathrm{FF}(\mathcal{I})$  such that $\mathcal{I}^g\cap X\text{ or }\mathcal{I}^g\backslash X$ is finite. We will need the following strengthening of maximality, which was introduced by Goldstern and Shelah in~\cite{MGSSdm}:

\begin{definition}\label{densemax}
An independent family $\mathcal{I}$ is \emph{densely maximal} if $$\forall X\in [\omega]^\omega\text{ and }g'\in\mathrm{FF}(\mathcal{I})\;\exists g\supseteq g'\text{ in }\mathrm{FF}(\mathcal{I})\text{ such that }\mathcal{I}^g\cap X\text{ or }\mathcal{I}^g\backslash X\text{ is finite.}$$
\end{definition}
In other words, an independent family $\mathcal I$ is {\em densely maximal} if for each $X \in [\omega]^\omega$ the collection of functions $g\in \mathrm{FF}(\mathcal I)$ witnessing that $\mathcal I \cup \{X\}$ is not a larger independent family is dense in $(\mathrm{FF}(\mathcal I), \supseteq)$. 

\begin{definition}\label{density.filter}
Let $\mathcal I$ be an independent family. The \emph{density ideal of $\mathcal{I}$}, denoted $\mathrm{id}(\mathcal{I})$, is $\{X\subseteq\omega\mid \forall g'\in\mathrm{FF}(\mathcal{I})\;\exists g\supseteq g'\text{ in }\mathrm{FF}(\mathcal{I})\text{ such that }\mathcal{I}^g\cap X\text{ is finite}\}$.
Dual to the density ideal of $\mathcal{I}$ is the \emph{density filter of $\mathcal{I}$}, denoted $\mathrm{fil}(\mathcal{I})$ and defined as $$\{X\subseteq\omega\mid \forall g'\in\mathrm{FF}(\mathcal{I})\;\exists g\supseteq g'\text{ in }\mathrm{FF}(\mathcal{I})\text{ such that }\mathcal{I}^g\backslash X\text{ is finite}\}.$$
\end{definition}

For an infinite independent family $\mathcal{I}$, none of the above definitions' meanings change if we replace the word ``finite'' with ``empty''.  For a family $\mathcal X \subseteq [\omega]^\omega$, the set $\langle \mathcal X \rangle_{\mathrm{dn}}$ denotes the downward closure of $\mathcal X$ under $\subseteq^*$, i.e. $A \in \langle \mathcal X \rangle_{\mathrm{dn}}$ if and only if there is an $X \in \mathcal X$ with $A \subseteq^* X$. Similarly, $\langle \mathcal X\rangle_{\mathrm{up}}$ denotes the upward closure of $\mathcal X$ under $\subseteq^*$.  We will make use of the following, see~\cite[Lemma 5.4]{BFS21}.
\begin{lemma}\label{lemma0} A family $\mathcal{I}\subseteq [\omega]^\omega$ is densely maximal if and only if $$P(\omega)=\mathrm{fil}(\mathcal{I})\cup\langle\omega\backslash\mathcal{I}^g\mid g\in\mathrm{FF}(\mathcal{I})\rangle_{\mathrm{dn}}.$$
\end{lemma}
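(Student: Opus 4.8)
The plan is to unwind both the definition of dense maximality and the defining conditions of $\mathrm{fil}(\mathcal I)$ and $\langle \omega\setminus\mathcal I^g \mid g\in\mathrm{FF}(\mathcal I)\rangle_{\mathrm{dn}}$, and show that the equation $P(\omega)=\mathrm{fil}(\mathcal I)\cup\langle\omega\setminus\mathcal I^g\mid g\in\mathrm{FF}(\mathcal I)\rangle_{\mathrm{dn}}$ says exactly that every $X\subseteq\omega$ is "decided" densely by $\mathcal I$. Throughout I will freely use the remark just made in the text that, since $\mathcal I$ is infinite independent, one may replace "finite" by "empty" in all these definitions; this is a harmless simplification but worth invoking explicitly. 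I will also dispose of the trivial cases first: if $X$ is finite then $\omega\setminus X\supseteq^*\omega\setminus\mathcal I^g$ fails in general, so instead note $X$ finite gives $X\in\mathrm{id}(\mathcal I)$ hence $\omega\setminus X\in\mathrm{fil}(\mathcal I)$ — actually the cleaner observation is that finite sets lie in $\langle\omega\setminus\mathcal I^g\rangle_{\mathrm{dn}}$ and cofinite sets lie in $\mathrm{fil}(\mathcal I)$, so the content is really about $X\in[\omega]^\omega$ with $\omega\setminus X$ infinite; I would remark on this but spend no effort on it.

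For the forward direction, assume $\mathcal I$ is densely maximal and fix $X\subseteq\omega$; I want $X\in\mathrm{fil}(\mathcal I)$ or $X\in\langle\omega\setminus\mathcal I^g\rangle_{\mathrm{dn}}$. If $X\in\langle\omega\setminus\mathcal I^g\rangle_{\mathrm{dn}}$ we are done, so suppose not: then for every $g\in\mathrm{FF}(\mathcal I)$ we have $X\not\subseteq^*\omega\setminus\mathcal I^g$, equivalently $\mathcal I^g\cap X$ is infinite. I claim this forces $X\in\mathrm{fil}(\mathcal I)$. Indeed take any $g'\in\mathrm{FF}(\mathcal I)$; apply dense maximality to $X$ and $g'$ to get $g\supseteq g'$ with $\mathcal I^g\cap X$ finite or $\mathcal I^g\setminus X$ finite. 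The first alternative is impossible by the previous sentence, so $\mathcal I^g\setminus X$ is finite, i.e. $g$ witnesses $X\in\mathrm{fil}(\mathcal I)$ relative to $g'$. Since $g'$ was arbitrary, $X\in\mathrm{fil}(\mathcal I)$.

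For the converse, assume $P(\omega)=\mathrm{fil}(\mathcal I)\cup\langle\omega\setminus\mathcal I^g\rangle_{\mathrm{dn}}$ and fix $X\in[\omega]^\omega$ and $g'\in\mathrm{FF}(\mathcal I)$; I must find $g\supseteq g'$ with $\mathcal I^g\cap X$ finite or $\mathcal I^g\setminus X$ finite. The natural move is to pass to the "relativized" independent family $\mathcal I\hook g' := \{A\cap\mathcal I^{g'}\mid A\in\mathrm{dom}(g')\text{ irrelevant}\}$ — more precisely to restrict attention to functions extending $g'$ — and to consider $Y:=X\cap\mathcal I^{g'}$; apply the hypothesis to $Y$ (or to $X$ directly, being careful). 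If $X\in\langle\omega\setminus\mathcal I^h\mid h\in\mathrm{FF}(\mathcal I)\rangle_{\mathrm{dn}}$, pick $h$ with $X\subseteq^*\omega\setminus\mathcal I^h$, i.e. $\mathcal I^h\cap X$ finite; but $h$ need not extend $g'$, so I must argue that $h\cup g'$ (suitably amalgamated, or $\mathcal I^{h}\cap\mathcal I^{g'}=\mathcal I^{g}$ for an appropriate common extension $g$ when $h,g'$ are compatible, and one can always arrange compatibility by first extending $g'$ on $\mathrm{dom}(h)\setminus\mathrm{dom}(g')$ — here is where the downward-closure robustness matters) still works, using that $\mathcal I^g\subseteq\mathcal I^h$ so $\mathcal I^g\cap X\subseteq\mathcal I^h\cap X$ is finite. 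Symmetrically if $X\in\mathrm{fil}(\mathcal I)$ then by definition applied to $g'$ itself there is $g\supseteq g'$ with $\mathcal I^g\setminus X$ finite, which is exactly what we want.

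The main obstacle I anticipate is the bookkeeping around compatibility of conditions in $\mathrm{FF}(\mathcal I)$ in the converse direction: $\mathrm{fil}(\mathcal I)$ is defined with a "for all $g'$ there is $g\supseteq g'$" quantifier built in, so that side is free, but $\langle\omega\setminus\mathcal I^g\rangle_{\mathrm{dn}}$ only asserts the existence of a single $g$, with no control over its domain relative to the given $g'$. The fix is the standard observation that if $g$ and $g'$ disagree then one can nonetheless extend $g'$ to some $\tilde g$ that is compatible with enough of $g$ to make $\mathcal I^{\tilde g}\cap X$ remain finite — or, more slickly, replace $X$ by $X\cap\mathcal I^{g'}$ at the outset and work inside the independent family $\{A\cap\mathcal I^{g'} : A\in\mathcal I\setminus\mathrm{dom}(g')\}$ on the index set $\mathcal I^{g'}$, for which dense maximality "below $g'$" is literally the statement of the lemma for that smaller family; I would cite $\cite[\text{Lemma }5.4]{BFS21}$ for the full details and present only this outline.
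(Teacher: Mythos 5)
Your forward direction is correct. The converse, however, has a genuine gap, and neither of the two fixes you sketch closes it. The difficulty is exactly the one you flag: given $X$ and $g'$, applying the hypothesis to $X$ may only yield an $h$ with $X\cap\mathcal{I}^h$ finite where $h$ \emph{disagrees} with $g'$ on some $A\in\mathrm{dom}(h)\cap\mathrm{dom}(g')$; then $\mathcal{I}^h\cap\mathcal{I}^{g'}=\emptyset$, so the finiteness of $X\cap\mathcal{I}^h$ carries no information about $X\cap\mathcal{I}^{g}$ for any $g\supseteq g'$. Extending $g'$ on $\mathrm{dom}(h)\setminus\mathrm{dom}(g')$ only handles the case where $h$ and $g'$ were compatible to begin with. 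Your ``slicker'' alternative of applying the hypothesis to $Y:=X\cap\mathcal{I}^{g'}$ is strictly worse: for any nonempty $g'$, choose $h$ disagreeing with $g'$ at a single set; then $Y\subseteq\mathcal{I}^{g'}$ is disjoint from $\mathcal{I}^h$, so $Y\in\langle\omega\setminus\mathcal{I}^h\mid h\in\mathrm{FF}(\mathcal{I})\rangle_{\mathrm{dn}}$ holds \emph{trivially} and the partition hypothesis tells you nothing about $Y$. Indeed, if $\mathcal{I}$ fails dense maximality at $(X,g')$, the set witnessing the failure of the displayed equation is neither $X$ nor $X\cap\mathcal{I}^{g'}$ (both of which can land in the right-hand side for spurious reasons).

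The correct move is to apply the hypothesis to $Z:=X\cup(\omega\setminus\mathcal{I}^{g'})$. If $Z\in\mathrm{fil}(\mathcal{I})$, instantiate the filter definition at $g'$ itself to get $g\supseteq g'$ with $\mathcal{I}^g\setminus Z$ finite; since $\mathcal{I}^g\subseteq\mathcal{I}^{g'}$ this set equals $\mathcal{I}^g\setminus X$, as desired. If instead $Z\subseteq^*\omega\setminus\mathcal{I}^h$ for some $h$, then $\mathcal{I}^h\setminus\mathcal{I}^{g'}\subseteq Z\cap\mathcal{I}^h$ is finite, and for an independent family this forces $h\supseteq g'$: if $h$ disagrees with $g'$ somewhere then $\mathcal{I}^h\subseteq\omega\setminus\mathcal{I}^{g'}$, making $Z\cap\mathcal{I}^h=\mathcal{I}^h$ infinite; and if $h$ merely omits some $A\in\mathrm{dom}(g')$ then $\mathcal{I}^{h\cup\{(A,1-g'(A))\}}$ is an infinite subset of $\mathcal{I}^h\setminus\mathcal{I}^{g'}$. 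Hence $h\supseteq g'$ and $\mathcal{I}^h\cap X\subseteq Z\cap\mathcal{I}^h$ is finite, so $g=h$ works. The union with $\omega\setminus\mathcal{I}^{g'}$ (rather than intersection with $\mathcal{I}^{g'}$) is the missing idea; without it the converse does not go through.
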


The following are easily verified, see \cite[Lemma 5.5]{BFS21}.
\begin{lemma}\label{lemma1}\hfil
\begin{enumerate}
\item
If $\mathcal I'$ is an independent family and $\mathcal{I}\subseteq\mathcal{I}'$ then $\mathrm{fil}(\mathcal{I})\subseteq\mathrm{fil}(\mathcal{I}')$;
\item If $\kappa$ is a regular uncountable cardinal and $\langle\mathcal{I}_\alpha\mid\alpha<\kappa\rangle$ is a continuous increasing chain of independent families then $\mathrm{fil}(\bigcup_{\alpha<\kappa}\mathcal{I}_\alpha)=\bigcup_{\alpha<\kappa}\mathrm{fil}(\mathcal{I}_\alpha)$;
\item If $\mathcal I$ is an independent family then $\mathrm{fil}(\mathcal{I})=\bigcup\{\,\mathrm{fil}(\mathcal{J})\mid \mathcal{J}\in [\mathcal{I}]^{\leq\omega}\}$.
\end{enumerate}
\end{lemma}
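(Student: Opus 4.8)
The plan is to prove the three items essentially by unwinding the definitions of $\mathrm{fil}(\mathcal I)$ and $\mathrm{FF}(\mathcal I)$, the only subtlety being that, since $\mathrm{fil}$ is governed by a quantifier over the \emph{finite} partial functions $g'\in\mathrm{FF}(\mathcal I)$, enlarging the family changes both the set of $g'$ one must handle and the set of witnessing extensions $g\supseteq g'$ available.

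For item (1), fix $X\in\mathrm{fil}(\mathcal I)$ and an arbitrary $g'\in\mathrm{FF}(\mathcal I')$. Since $g'$ has finite domain, $g'=g_0'\cup g_1'$ where $\mathrm{dom}(g_0')\subseteq\mathcal I$ and $\mathrm{dom}(g_1')\subseteq\mathcal I'\setminus\mathcal I$. Applying $X\in\mathrm{fil}(\mathcal I)$ to $g_0'$ yields $g\supseteq g_0'$ in $\mathrm{FF}(\mathcal I)$ with $\mathcal I^g\setminus X$ finite; then $h:=g\cup g_1'\in\mathrm{FF}(\mathcal I')$ extends $g'$, and since $\mathcal{I}'^h\subseteq\mathcal I^g$ (intersecting with further sets only shrinks $\mathcal I^g$, reading the new coordinates of $h$ correctly) we get $\mathcal{I}'^h\setminus X\subseteq\mathcal I^g\setminus X$ finite. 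The one thing to check here is that $g\cup g_1'$ is genuinely a function, i.e. that $g$ and $g_1'$ have disjoint domains; this holds because $\mathrm{dom}(g)\subseteq\mathcal I$ while $\mathrm{dom}(g_1')\cap\mathcal I=\emptyset$. Hence $X\in\mathrm{fil}(\mathcal I')$.

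For item (2), the inclusion $\supseteq$ is immediate from (1) applied to each $\mathcal I_\alpha\subseteq\bigcup_{\beta<\kappa}\mathcal I_\beta$. For $\subseteq$, let $X\in\mathrm{fil}(\bigcup_{\alpha<\kappa}\mathcal I_\alpha)$. Since $\kappa$ is regular and uncountable and the chain is continuous and increasing, every finite subset of $\bigcup_{\alpha}\mathcal I_\alpha$, and hence every $g'\in\mathrm{FF}(\bigcup_\alpha\mathcal I_\alpha)$, already lies in some $\mathcal I_{\alpha}$; more importantly, using a pressing-down / cofinality argument (there are only countably many relevant $g'$ needed to witness membership? — no, rather: fix $\alpha^*$ large enough that one can run the argument) one shows that for a suitable single $\alpha^*<\kappa$, $X\in\mathrm{fil}(\mathcal I_{\alpha^*})$. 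Concretely: for each $g'\in\mathrm{FF}(\bigcup_\alpha\mathcal I_\alpha)$ choose a witness $g_{g'}\supseteq g'$ in $\mathrm{FF}(\bigcup_\alpha\mathcal I_\alpha)$ with $(\bigcup_\alpha\mathcal I_\alpha)^{g_{g'}}\setminus X$ finite; since $\mathrm{FF}(\mathcal I_\alpha)$ for $\alpha<\kappa$ exhaust $\mathrm{FF}(\bigcup_\alpha\mathcal I_\alpha)$ and $\mathrm{dom}(g_{g'})$ is finite, each $g_{g'}\in\mathrm{FF}(\mathcal I_{\beta(g')})$ for some $\beta(g')<\kappa$; now restrict attention to $g'\in\mathrm{FF}(\mathcal I_{\alpha^*})$ and note $\beta(g')$ ranges over $<\kappa$, and by regularity of $\kappa$ there is a single $\alpha^{**}\geq\alpha^*$ bounding all the $\beta(g')$ for $g'\in\mathrm{FF}(\mathcal I_{\alpha^*})$ — wait, $\mathrm{FF}(\mathcal I_{\alpha^*})$ may be uncountable, so one instead argues: it suffices to show $X\in\mathrm{fil}(\mathcal I_{\alpha^*})$ for \emph{cofinally many} $\alpha^*$, and for this, given $g'\in\mathrm{FF}(\mathcal I_{\alpha^*})$ pick the witness $g_{g'}$ in $\mathrm{FF}(\bigcup_\alpha\mathcal I_\alpha)$, let $\alpha^{**}>\alpha^*$ with $g_{g'}\in\mathrm{FF}(\mathcal I_{\alpha^{**}})$, and conclude $X\in\mathrm{fil}(\mathcal I_{\alpha^{**}})$ — so the set of $\alpha$ with $X\in\mathrm{fil}(\mathcal I_\alpha)$ is cofinal in $\kappa$, and then continuity of the chain promotes this to $X\in\mathrm{fil}(\mathcal I_\alpha)$ for all $\alpha$ below this cofinal set, hence (using (1) once more) into $\bigcup_\alpha\mathrm{fil}(\mathcal I_\alpha)$. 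I anticipate that the main obstacle is stating this bookkeeping cleanly; the honest short argument is that $X\in\mathrm{fil}(\mathcal I)$ is a $\Pi_1$-over-$\mathrm{FF}(\mathcal I)$ statement each instance of which is \emph{witnessed} by a single finite object, so it reflects to a cofinal set of initial segments of the chain, and continuity closes the gap.

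For item (3), the inclusion $\supseteq$ is again (1) applied to $\mathcal J\subseteq\mathcal I$ for each countable $\mathcal J$. For $\subseteq$, let $X\in\mathrm{fil}(\mathcal I)$; I want a countable $\mathcal J\subseteq\mathcal I$ with $X\in\mathrm{fil}(\mathcal J)$. Build $\mathcal J=\bigcup_n\mathcal J_n$ recursively: start with $\mathcal J_0$ any countable (say finite or empty) subset; given $\mathcal J_n$ countable, for each of the countably many $g'\in\mathrm{FF}(\mathcal J_n)$ use $X\in\mathrm{fil}(\mathcal I)$ to pick a witness $g_{g'}\supseteq g'$ in $\mathrm{FF}(\mathcal I)$ with $\mathcal I^{g_{g'}}\setminus X$ finite, and let $\mathcal J_{n+1}=\mathcal J_n\cup\bigcup_{g'\in\mathrm{FF}(\mathcal J_n)}\mathrm{dom}(g_{g'})$, still countable. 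Then $\mathcal J$ is countable, and given any $g'\in\mathrm{FF}(\mathcal J)$ we have $g'\in\mathrm{FF}(\mathcal J_n)$ for some $n$, so the witness $g_{g'}$ was chosen with $\mathrm{dom}(g_{g'})\subseteq\mathcal J_{n+1}\subseteq\mathcal J$, giving $g_{g'}\in\mathrm{FF}(\mathcal J)$ and $\mathcal J^{g_{g'}}\setminus X\subseteq\mathcal I^{g_{g'}}\setminus X$ finite (by the same shrinking observation as in (1), since $\mathcal J^{g_{g'}}=\mathcal I^{g_{g'}}$ here as $g_{g'}\in\mathrm{FF}(\mathcal J)$). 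Hence $X\in\mathrm{fil}(\mathcal J)$, as required. No real obstacle here beyond ensuring the closure construction only ever adds countably many sets at each stage, which is immediate since $|\mathrm{FF}(\mathcal J_n)|=\aleph_0$ and each $\mathrm{dom}(g_{g'})$ is finite. $\hfill\square$
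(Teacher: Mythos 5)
Your arguments for items (1) and (3) are correct: in (1) the key observation that $(\mathcal I')^h \subseteq \mathcal I^g$ because $h = g \cup g_1'$ only adds further sets to the intersection is exactly right (note that $\mathcal I^g$ depends only on $g$, not on the ambient family), and the countable closure construction in (3) is the standard argument and is carried out properly. The paper itself offers no proof of this lemma (it cites Lemma 5.5 of \cite{BFS21} and calls the items ``easily verified''), so there is no in-paper argument to compare against; items (1) and (3) stand as written.

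Item (2), however, has a genuine gap as a standalone argument. You correctly observe that the naive reflection fails because $\mathrm{FF}(\mathcal I_{\alpha^*})$ may have cardinality $\geq \kappa$, so regularity does not bound the ordinals $\beta(g')$ over all $g' \in \mathrm{FF}(\mathcal I_{\alpha^*})$ simultaneously. But your replacement step --- given a single $g' \in \mathrm{FF}(\mathcal I_{\alpha^*})$, find $\alpha^{**}$ with $g_{g'} \in \mathrm{FF}(\mathcal I_{\alpha^{**}})$ and ``conclude $X \in \mathrm{fil}(\mathcal I_{\alpha^{**}})$'' --- is a non sequitur: producing a witness for one $g'$ says nothing about the other (possibly $\kappa$-many) elements of $\mathrm{FF}(\mathcal I_{\alpha^{**}})$, each of which needs its own witness inside $\mathrm{FF}(\mathcal I_{\alpha^{**}})$. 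The closing appeal to continuity also points the wrong way: by item (1) the set $\{\alpha \mid X \in \mathrm{fil}(\mathcal I_\alpha)\}$ is upward closed, so one $\alpha$ suffices and nothing needs to (or does) propagate downward. The short correct proof of (2) routes through (3), which you have already established: given $X \in \mathrm{fil}(\bigcup_{\alpha<\kappa}\mathcal I_\alpha)$, item (3) yields a countable $\mathcal J \subseteq \bigcup_{\alpha<\kappa}\mathcal I_\alpha$ with $X \in \mathrm{fil}(\mathcal J)$; since $\mathcal J$ is countable, the chain is increasing and $\kappa$ is regular uncountable, $\mathcal J \subseteq \mathcal I_\alpha$ for some $\alpha < \kappa$, whence $X \in \mathrm{fil}(\mathcal I_\alpha)$ by item (1). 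So prove (3) before (2), or cite it there; as ordered, your argument for (2) does not go through.
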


%We will also need some terminology regarding filters. 
Recall that given a family $\mathcal F$ of subsets of $\omega$ we say that 
\begin{enumerate}
\item
$\mathcal F$ is a $P${\em -set} if every countable $\{A_n \; | \; n < \omega\} \subseteq \mathcal F$ has a pseudointersection $B \in \mathcal F$;
\item
$\mathcal F$ is a $Q$-{\em set} if given every partition of $\omega$ into finite sets $\{I_n \; |\; n < \omega\}$ there is a {\em semiselector} $A \in \mathcal F$,  i.e. a set $A\in\mathcal F$ such that  $|A \cap I_n| \leq 1$ for all $n < \omega$,
\item
$\mathcal F$ is {\em Ramsey} if it is both a $P$-set and a $Q$-set.
\end{enumerate}
If $\mathcal F$ is a filter and a $P$-set (respectively a $Q$-set, Ramsey set) we call $\mathcal F$ a $P$-filter (respectively a $Q$-filter, Ramsey filter).

\begin{definition}\label{def.selective.independent}
An independent family $\mathcal{I}$ is \emph{selective} if it is densely maximal and $\mathrm{fil}(\mathcal{I})$ is Ramsey.
\end{definition}

\begin{fact}[Shelah, see \cite{Sh92}] $\CH$ implies the existence of a selective independent family.
\end{fact}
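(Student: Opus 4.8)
The plan is a transfinite recursion of length $\omega_1$, using $\CH$ throughout. Fix enumerations $\langle X_\alpha\mid\alpha<\omega_1\rangle$ of $[\omega]^\omega$, $\langle\vec{D}_\alpha\mid\alpha<\omega_1\rangle$ of all $\subseteq^*$-decreasing $\omega$-sequences of infinite subsets of $\omega$, and $\langle\mathcal P_\alpha\mid\alpha<\omega_1\rangle$ of all partitions of $\omega$ into finite sets, arranging (by a routine pairing argument) that every such object occurs cofinally often in its list. One builds a continuous $\subseteq$-increasing chain $\langle\mathcal I_\alpha\mid\alpha<\omega_1\rangle$ of \emph{countable} independent families from a fixed countable $\mathcal I_0$, and sets $\mathcal I=\bigcup_{\alpha<\omega_1}\mathcal I_\alpha$: then $|\mathcal I|=\aleph_1$ and $\mathcal I$ is independent, being an increasing union of independent families. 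The bookkeeping assigns to each stage one of three tasks, met at successor steps by enlarging $\mathcal I_\alpha$ to a countable $\mathcal I_{\alpha+1}$; at limits one takes unions. The tasks correspond to the clauses of Definition~\ref{def.selective.independent}: (i) via $X_\alpha$, make $\mathcal I$ densely maximal (Definition~\ref{densemax}); (ii) via $\vec{D}_\alpha$, make $\mathrm{fil}(\mathcal I)$ a $P$-filter; (iii) via $\mathcal P_\alpha$, make $\mathrm{fil}(\mathcal I)$ a $Q$-filter.

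Two remarks make the cofinal bookkeeping suffice. First, by Lemma~\ref{lemma1}(2),(3), $\mathrm{fil}$ commutes with the unions along the chain and every countable $\mathcal J\subseteq\mathrm{fil}(\mathcal I)$ already lies in $\mathrm{fil}(\mathcal I_\beta)$ for some $\beta<\omega_1$, so any $\vec{D}\subseteq\mathrm{fil}(\mathcal I)$ becomes ``available'' at some stage; second, the assertions ``$g\supseteq g'$ witnesses dense maximality at $X$'' (which only involves $g,g',X$) and ``$Z\in\mathrm{fil}(\mathcal I_\alpha)$'' (by Lemma~\ref{lemma1}(1)) persist up the chain. Hence it is enough that each requirement be met at \emph{some} stage past the point where its data has appeared, which cofinal repetition guarantees; for dense maximality it is convenient to aim, via Lemma~\ref{lemma0}, at $P(\omega)=\mathrm{fil}(\mathcal I)\cup\langle\omega\setminus\mathcal I^g\mid g\in\mathrm{FF}(\mathcal I)\rangle_{\mathrm{dn}}$.

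The substance lies in the one-step extension lemmas. \emph{Dense maximality at $X$}: working below $\mathcal I_\alpha$, for each $g'\in\mathrm{FF}(\mathcal I_\alpha)$ none of whose extensions already decides $X$ (so $\mathcal I^h\cap X$ and $\mathcal I^h\setminus X$ are infinite for all $h\supseteq g'$), adjoin a fresh set $A_{g'}$ with $A_{g'}\cap\mathcal I^{g'}$ an infinite, co-infinite subset of $\mathcal I^{g'}\cap X$, and with $A_{g'}$ chosen on $\omega\setminus\mathcal I^{g'}$ so as to split each of the countably many Boolean combinations of the current family — a routine diagonalization preserving independence; then $g'\cup\{(A_{g'},1)\}$ forces $\mathcal I_{\alpha+1}^{\,g}\subseteq^{*}X$. \emph{$P$- and $Q$-steps}: here one must enlarge $\mathcal I_\alpha$ so that a suitable pseudointersection of $\vec{D}_\alpha$ (resp.\ a semiselector for $\mathcal P_\alpha$) is pushed \emph{into} $\mathrm{fil}(\mathcal I_{\alpha+1})$, which is delicate because adjoining sets changes the density filter only in a constrained way: a new set $Z$ enters $\mathrm{fil}(\mathcal I\cup\{A\})$ only if, below every condition, \emph{both} $A$ and its complement become almost contained in $Z$, so $A$ and $Z$ must be engineered together. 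The standard organization, going back to Shelah, runs alongside the recursion a construction of a selective ultrafilter — deciding membership of each $W\subseteq\omega$ and diagonalizing for the $P$- and $Q$-properties as one builds a Ramsey ultrafilter — while at each step adjoining to $\mathcal I_\alpha$ exactly the independent sets needed to keep the growing ultrafilter inside $\mathrm{fil}(\mathcal I_\alpha)$; since $\mathrm{fil}(\mathcal I)$ is a proper filter (it omits $\emptyset$ and, by Lemma~\ref{lemma1}, is closed under finite intersections) and a selective ultrafilter is maximal among proper filters, $\mathrm{fil}(\mathcal I)$ then coincides with that selective ultrafilter. The full combinatorics of running these diagonalizations simultaneously without conflict is the content of~\cite{Sh92}; careful modern accounts appear in~\cite{DefMIF},~\cite{FM19}, and~\cite{CS}, and I would follow those.

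The step I expect to be the main obstacle is precisely this coupling: meeting dense maximality at a given $X$ tends to enlarge the density filter in an uncontrolled direction, whereas making $\mathrm{fil}(\mathcal I)$ Ramsey wants it pinned to a prescribed selective ultrafilter, and showing that the sets one is forced to add for dense maximality can always be chosen compatibly with the parallel Ramsey-ultrafilter construction (and conversely) is the crux. Granting it, at stage $\omega_1$ the family $\mathcal I$ is densely maximal with $\mathrm{fil}(\mathcal I)$ Ramsey, i.e.\ selective, and in particular maximal.
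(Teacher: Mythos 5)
The paper does not actually prove this statement --- it is recorded as a Fact with a citation to \cite{Sh92} --- so there is no in-paper argument to compare yours against. Judged on its own terms, your sketch has the right architecture (a length-$\omega_1$ recursion under $\CH$ through countable independent families, with bookkeeping for dense maximality and for the $P$- and $Q$-properties of the density filter, using Lemma~\ref{lemma1} for persistence and Lemma~\ref{lemma0} to recast dense maximality). But you have not proved the statement: you explicitly defer the one-step extension lemmas --- in particular the step that pushes a prescribed set \emph{into} $\mathrm{fil}(\mathcal{I}_{\alpha+1})$ while preserving independence, and the compatibility of that with the dense-maximality extensions --- to the very reference the Fact already cites. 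That deferred step is the entire mathematical content of the Fact, so as it stands this is an outline of the standard construction rather than a proof.

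One remark that substantially defuses the ``coupling'' you identify as the main obstacle: you do not need to run dense maximality and the Ramsey requirement as two separate, potentially conflicting tasks. It suffices to build, alongside $\mathcal{I}$, a single Ramsey ultrafilter $\mathcal{U}$ (available under $\CH$) and to arrange the sole requirement $\mathcal{U}\subseteq\mathrm{fil}(\mathcal{I})$. Since $\mathrm{fil}(\mathcal{I})$ is a proper filter (independence gives $\emptyset\notin\mathrm{fil}(\mathcal{I})$) containing the ultrafilter $\mathcal{U}$, it equals $\mathcal{U}$ and is therefore Ramsey; and dense maximality is then automatic from Lemma~\ref{lemma0}: given $X$, either $X\in\mathcal{U}=\mathrm{fil}(\mathcal{I})$, or $\omega\setminus X\in\mathrm{fil}(\mathcal{I})$, in which case for every $g'$ there is $g\supseteq g'$ with $\mathcal{I}^g\cap X$ finite, i.e.\ $X\in\langle\omega\setminus\mathcal{I}^g\mid g\in\mathrm{FF}(\mathcal{I})\rangle_{\mathrm{dn}}$. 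So the construction reduces to a single extension lemma --- given a countable independent $\mathcal{A}$ and a set $U$ meeting every $\mathcal{A}^g$ infinitely, extend $\mathcal{A}$ to a countable independent family whose density filter contains $U$ --- together with the usual $\CH$ diagonalization keeping the growing ultrafilter selective and compatible with the growing family. If you supply that lemma (it is where the real work in \cite{Sh92} and \cite{DefMIF} lives), the rest of your outline goes through.
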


The main goal of this section is to prove a preservation theorem which greatly generalizes the following list from the literature.

\begin{fact}\label{fact.all}
Let $\mathcal I$ be a selective independent family. Then $\mathcal I$ is remains selective (and hence maximal) after forcing with a countable support product of Sacks forcing  (Shelah, see \cite[Theorem 4.6]{DefMIF} or~\cite[Corollary 37]{FM19}). Moreover, $\mathcal{I}$ remains selective independent after forcing with the countable support iteration of any of the following:
\begin{enumerate}
\item Sacks forcing (Shelah, see \cite[Theorem 4.6]{DefMIF} or \cite[Corollary 37]{FM19});
%\item
%countable support products of Sacks forcing (Shelah, see \cite[Theorem %4.6]{DefMIF} or~\cite[Corollary 37]{FM19});
\item forcing notions of the form $\mathbb Q_\mathcal I$ from Shelah's~\cite{Sh92}; 
\item Miller partition forcing (see~\cite{CFGS21});
\item $h$-Perfect Tree Forcing Notions for different functions $h:\omega \to \omega$ with $1 < h(n) < \omega$ for all $N < \omega$ (see~\cite{CS});
\item coding with perfect trees as in~\cite{BFS21};
\item Miller lite forcing, see Theorem \ref{millerlite2} below;
\item any mix of the above (a consequence of Theorem~\ref{mainthm11}).
\end{enumerate}
\end{fact}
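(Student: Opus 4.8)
The plan is to derive the whole list from the General Preservation Theorem, Theorem~\ref{mainthm11}, by verifying, for each forcing notion $\Q$ occurring in items (1)--(6), the three hypotheses of that theorem: that $\Q$ is proper, that $\Q$ is Cohen preserving, and that $\Q$ preserves the dense maximality of $\mathcal I$ in the sense of Definition~\ref{densemax}. Since the first two are $\ZFC$-theorems about the concrete posets in question (they make no reference to $\mathcal I$), they hold in every intermediate model of a countable support iteration; the third is the only clause mentioning $\mathcal I$, and I will come back to the point that it too must be checked over the intermediate models. Granting all this, item (7) is then immediate: a countable support iteration $\langle \P_\alpha, \dot\Q_\alpha \mid \alpha < \delta \rangle$ in which each $\dot\Q_\alpha$ is forced to equal one of the posets from (1)--(6) has all iterands forced to be proper, Cohen preserving, and dense-maximality preserving, so Theorem~\ref{mainthm11} applies verbatim and gives that $\P_\delta$ keeps $\mathcal I$ selective, hence maximal. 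The countable support \emph{product} of Sacks forcing is not literally an instance of the iteration theorem; for it I would simply quote Shelah's argument (\cite[Theorem 4.6]{DefMIF}, \cite[Corollary 37]{FM19}), observing that the fusion/preservation bookkeeping there is formally the product analogue of the one used to prove Theorem~\ref{mainthm11}.

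Properness and Cohen preservation are the cheap inputs. Sacks forcing, Miller partition forcing, the $h$-perfect tree forcings, and Miller lite forcing are all proper and have the Sacks property, so by Lemma~\ref{Sacks.implies.Cohen} they are Cohen preserving (and a fortiori $\baire$-bounding, cf. Theorem~\ref{thm2}). For Shelah's $\Q_\mathcal I$ and for the coding-with-perfect-trees poset, properness and Cohen preservation are established in \cite{Sh92} and \cite{BFS21} respectively. Because these are local, $\ZFC$-provable properties of the named posets, they automatically hold of $\dot\Q_\alpha$ over $V^{\P_\alpha}$ for each $\alpha$.

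The substantive step, in each case, is the successor statement: for the forcing $\Q$ at hand, every $\Q$-name $\dot X$ for an infinite subset of $\omega$ and every $g' \in \mathrm{FF}(\mathcal I)$ admit some $g \supseteq g'$ in $\mathrm{FF}(\mathcal I)$ and a condition below any prescribed one forcing that $\mathcal I^g \cap \dot X$ or $\mathcal I^g \setminus \dot X$ is finite. For Sacks this is \cite[Theorem 4.6]{DefMIF} / \cite[Corollary 37]{FM19}; for $\Q_\mathcal I$ it is implicit in \cite{Sh92}; for Miller partition forcing it is \cite{CFGS21}; for the $h$-perfect tree forcings it is \cite{CS}; for coding with perfect trees it is \cite{BFS21}; and for Miller lite it is exactly the content of Theorem~\ref{millerlite2}, proved in Section~4. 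These are the partial-order-specific fusion arguments that, as noted in the introduction, resist unification, so for the purpose of Fact~\ref{fact.all} I would invoke them as black boxes. Here one must take care that each is robust enough to be applied not over the original model of $\CH$ but over the intermediate model $V^{\P_\alpha}$, with respect to whatever $\mathcal I$ has become there; this is fine because each cited proof is stated for an arbitrary densely maximal (indeed selective) family over an arbitrary ground model, and the inductive bookkeeping inside the proof of Theorem~\ref{mainthm11} guarantees that $\mathcal I$ is still selective, hence still densely maximal, in $V^{\P_\alpha}$ at the moment the clause for $\dot\Q_\alpha$ is used — so there is no circularity.

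The main obstacle is therefore not conceptual but one of assembly and care: collecting the six existing (or, for Miller lite, newly proved) successor-step preservation arguments and confirming that each is genuinely a statement about one iterand over an arbitrary model, so that it may be quoted over the intermediate models of the iteration. Once that is done, items (1)--(7) all follow from a single invocation of Theorem~\ref{mainthm11}, together with the quoted product argument of Shelah for the countable support product of Sacks forcing.
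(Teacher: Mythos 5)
Your proposal matches the paper's treatment: Fact~\ref{fact.all} is stated without proof as a compilation of the cited single-step preservation results, with item (6) supplied by Theorem~\ref{millerlite2} and item (7) obtained exactly as you describe by feeding the iterand-by-iterand hypotheses (properness, Cohen preservation, preservation of dense maximality) into Theorem~\ref{mainthm11}, the countable support product of Sacks being quoted separately. The only small caveat is that for Miller partition forcing it is safer to take Cohen preservation directly from \cite{CFGS21} rather than routing it through the Sacks property; otherwise your assembly is exactly the intended one.
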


The proofs of all of the above results are similar. However up until now it was not clear how to reunite them into one single  preservation theorem, which is accomplished below:

\begin{theorem}\label{mainthm11}
Let $\delta$ be an ordinal. Let $\mathcal I$ be a selective independent family and let $\langle \P_\alpha \dot{\Q}_\alpha \; | \; \alpha < \delta\rangle$ be a countable support iteration of proper forcing notions so that for every $\alpha < \delta$ we have that $$\forces_\alpha``\dot{\Q}_\alpha\hbox{ is Cohen preserving}".$$ 
If for every $\alpha < \delta$ we have 
$$\forces_\alpha``\dot{\Q}\hbox{ preserves the dense maximality of } \mathcal I"$$ then $\P_\delta$ preserves the selectivity of $\mathcal I$. %\label{mainthm11}
\end{theorem}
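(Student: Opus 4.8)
The strategy is a standard induction on $\delta$ using Shelah's preservation machinery at limits and a direct argument at successors, with the novelty that the successor step is now uniform because it only invokes Cohen preservation (via Theorem~\ref{Cohenequiv}) rather than poset-specific combinatorics. First I would fix the selective independent family $\mathcal I$ from the ground model (where $\CH$ holds, giving us such a family by Shelah's Fact) and recall that selectivity has two components: dense maximality, and the density filter $\mathrm{fil}(\mathcal I)$ being Ramsey, i.e. both a $P$-filter and a $Q$-filter. Dense maximality is preserved at every step by hypothesis; by Lemma~\ref{lemma1}(1) the density filter can only grow along the iteration, and by Lemma~\ref{lemma1}(3) being a $P$-filter and a $Q$-filter are $\sigma$-properties, so the real content is to show that $\mathrm{fil}(\mathcal I)$ (equivalently, that $\mathcal I$ together with dense maximality) remains Ramsey in $V^{\P_\delta}$. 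Note also that by Theorem~\ref{thm2} each $\dot{\Q}_\alpha$ is forced to be $\baire$-bounding, hence (by Theorem~\ref{Shelah_preservation2} and the usual fact that countable support iterations of proper $\baire$-bounding forcings are $\baire$-bounding) $\P_\delta$ is proper, $\baire$-bounding and Cohen preserving.

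For the \emph{successor step} $\delta = \gamma + 1$: assume $\P_\gamma$ preserves selectivity of $\mathcal I$, so in $V^{\P_\gamma}$ the family $\mathcal I$ is selective, and we force with $\dot{\Q}_\gamma$, which is proper, Cohen preserving, $\baire$-bounding, and preserves dense maximality. Working in $V^{\P_\gamma}$ I would show $\mathrm{fil}(\mathcal I)$ stays a $Q$-filter and a $P$-filter. For the $Q$-filter part: given a $\dot{\Q}_\gamma$-name for a partition of $\omega$ into finite intervals $\langle I_n : n<\omega\rangle$, use $\baire$-bounding to find a ground-model partition into intervals $\langle J_m : m < \omega\rangle$ that is coarser (each $J_m$ a union of consecutive $I_n$'s and the map $n\mapsto m$ bounded); a ground-model semiselector for $\langle J_m\rangle$ that lies in $\mathrm{fil}(\mathcal I)$ — which exists since $\mathcal I$ was selective in $V^{\P_\gamma}$ — is then a semiselector for $\langle I_n\rangle$. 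For the $P$-filter part: given a name for a countable decreasing-mod-finite sequence $\langle \dot A_n : n < \omega\rangle$ in $\mathrm{fil}(\mathcal I)$, one needs a pseudointersection in $\mathrm{fil}(\mathcal I)$. Here is where dense maximality and Cohen preservation interact: by Lemma~\ref{lemma0}, $P(\omega) = \mathrm{fil}(\mathcal I) \cup \langle \omega \setminus \mathcal I^g : g \in \mathrm{FF}(\mathcal I)\rangle_{\mathrm{dn}}$ in $V^{\P_\gamma\ast\dot{\Q}_\gamma}$ (since dense maximality is preserved), so it suffices to produce a pseudointersection $B$ of the $\dot A_n$'s that is \emph{not} almost contained in any $\omega \setminus \mathcal I^g$, i.e. $B \cap \mathcal I^g$ is infinite for every $g$; such a $B$ must then lie in the density filter. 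To build such a $B$, I would set up a rank/fusion argument: using Cohen preservation in the strong form of Theorem~\ref{Cohenequiv}(4) — pass to a countable $M \prec H_\theta$ containing the relevant names, take a Cohen real $c$ over $M$, and use a condition $q_c$ forcing $c$ Cohen over $M[\dot G]$ — one reads off from $c$ and the $M$-generic interpretations of the $\dot A_n$'s and the (densely many) $\mathcal I^g$'s a single infinite set $B \in V$ that diagonalizes: $B \subseteq^* \dot A_n$ for all $n$ and $B \cap \mathcal I^g$ infinite for all $g$, exactly as in the successor arguments of \cite{DefMIF}, \cite{CS}, \cite{BFS21}, but now extracted solely from Cohen-genericity plus $\baire$-bounding. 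I would verify that the meager/Cohen coding there only uses Cohen preservation.

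For the \emph{limit step}: if $\delta$ is a limit, by Theorem~\ref{Shelah_preservation2} $\P_\delta$ is proper and Cohen preserving, and it is $\baire$-bounding as noted. Dense maximality at the limit: this should be argued via Shelah's preservation theorem (Theorem~\ref{Shelah_preservation1} referenced in the paper) applied to the two-step "Cohen preservation plus $\baire$-bounding" template, exactly in the form already used in the cited literature, or — since the statement of the theorem hypothesizes that each $\dot{\Q}_\alpha$ preserves dense maximality — one reduces preservation of dense maximality at limits to the general Shelah limit-preservation result for the property "$\mathcal I$ is densely maximal" phrased as an appropriate $\in^*$-/Borel-preservation property, which holds because Cohen preservation is exactly the hypothesis that Shelah's theorem needs for this property; then invoke Lemma~\ref{lemma0} and Lemma~\ref{lemma1}(2) (continuity of $\mathrm{fil}$ along chains of cofinality $\ge \omega_1$) to conclude that $\mathrm{fil}(\mathcal I)$ remains Ramsey: a counterexample to the $P$- or $Q$-property of $\mathrm{fil}(\mathcal I)$ in $V^{\P_\delta}$ would, by $\baire$-bounding (for the $Q$-part, reducing to a ground-model partition) and by properness/$\mathrm{fil}$-continuity (for the $P$-part, the countable sequence of sets and a name for a would-be pseudointersection reflect into some $V^{\P_\alpha}$, $\alpha < \delta$), contradict selectivity at an earlier stage. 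The \textbf{main obstacle} is the $P$-filter clause at successors: making precise how the strong Cohen-preservation characterization Theorem~\ref{Cohenequiv}(4) yields the diagonalizing set $B$ meeting every $\mathcal I^g$, i.e. checking that the "coding with meager sets" at the heart of the known successor proofs genuinely factors through Cohen preservation alone and does not secretly use the Sacks property or a tree-fusion specific to each poset — this is the technical core and the place where one must be careful that $\baire$-bounding (itself a consequence of Cohen preservation, by Theorem~\ref{thm2}) suffices to tame the partitions and the witnessing functions $g \in \mathrm{FF}(\mathcal I)$.
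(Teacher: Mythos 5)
Your skeleton matches the paper's: induction on $\delta$, dense maximality at successors by hypothesis, Shelah's Theorem~\ref{Shelah_preservation1} at limits, and a separate argument that $\mathrm{fil}(\mathcal I)$ stays Ramsey. But the step you yourself flag as ``the main obstacle'' --- producing, at successors, a pseudointersection of the $\dot A_n$'s meeting every $\mathcal I^g$ infinitely via Theorem~\ref{Cohenequiv}(4) and a diagonalization ``as in the cited successor arguments'' --- is precisely the part you have not carried out, and it is also the wrong decomposition: those diagonalizations are what the cited papers use to prove preservation of dense maximality, which here is a hypothesis, not something you need to re-derive. The paper's resolution is a single clean lemma (Lemma~\ref{idealpreserving}) that you are missing: \emph{for any proper Cohen preserving forcing, the density ideal $\mathrm{id}(\mathcal I)$ in the extension is generated by $\mathrm{id}(\mathcal I)\cap V$} (dually for the filter). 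The proof is short: given a name $\dot X$ forced into the ideal, properness yields a countable $\mathcal J\subseteq\mathcal I$ witnessing this; identifying $\mathrm{FF}(\mathcal J)$ with $2^{<\omega}$, the set $\dot D=\{g : \mathcal J^g\cap\dot X=\emptyset\}$ is a name for a dense open subset of $2^{<\omega}$; Cohen preservation gives a ground model dense $E$ with $\check E\subseteq\dot D$, and $Y=\bigcap_{h\in E}(\omega\setminus\mathcal J^h)$ is a ground model member of the ideal containing $\dot X$. With this in hand the $P$-filter property at \emph{every} stage (successor or limit) is two lines: each $\dot A_n$ almost contains a ground model $Y_n\in\mathrm{fil}(\mathcal I)$, properness covers $\{Y_n\}$ by a ground model countable set, and the ground model $P$-filter property supplies the pseudointersection. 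No appeal to dense maximality, Lemma~\ref{lemma0}, or Cohen reals over countable models is needed here.

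The same lemma is also needed at your limit step and is glossed over there. Shelah's Theorem~\ref{Shelah_preservation1} requires a \emph{fixed} ground model Ramsey set $\mathcal F$; the paper takes $\mathcal F=\mathrm{fil}(\mathcal I)\cap V$ and $\mathcal H=\{\omega\setminus\mathcal I^g : g\in\mathrm{FF}(\mathcal I)\}$, and the inductive hypothesis $V[G_\beta]\models P(\omega)=\langle\mathcal F\rangle_{\mathrm{up}}\cup\langle\mathcal H\rangle_{\mathrm{dn}}$ is exactly dense maximality at stage $\beta$ (Lemma~\ref{lemma0}) \emph{combined with} Lemma~\ref{idealpreserving} to replace $\mathrm{fil}(\mathcal I)^{V[G_\beta]}$ by the upward closure of its ground model part. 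Your alternative suggestion of reflecting a counterexample to the $P$-property into some $V^{\P_\alpha}$, $\alpha<\delta$, fails at limits of cofinality $\omega$, where a countable family of new filter sets can appear cofinally in the iteration; the paper avoids this by applying Lemma~\ref{idealpreserving} directly to $\P_\delta$ (which is Cohen preserving by Theorem~\ref{Shelah_preservation2}). Your $Q$-filter argument via $\baire$-bounding (Theorem~\ref{thm2}) is correct and matches the paper.
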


The proof of Theorem \ref{mainthm11} is by induction on $\delta$. Before we can begin with it, we need to establish several lemmas. For the rest of this section fix an ordinal $\delta$, a selective independent family $\mathcal I$ and let $\langle \P_\alpha, \dot{\Q}_\alpha \; | \; \alpha < \delta\rangle$ be a countable support iteration of proper forcing notions so that for every $\alpha < \delta$ we have that $\forces_\alpha$``$\dot{\Q}_\alpha$ is Cohen preserving". In particular, by Theorem \ref{Shelah_preservation2}, $\P_\delta$ is Cohen preserving 

\begin{lemma}\label{idealpreserving}
If $G \subseteq \P_\delta$ is generic over $V$ then in $V[G]$,  $\mathrm{id}(\mathcal I)$ is generated by $\mathrm{id}(\mathcal I) \cap V$. 
\end{lemma}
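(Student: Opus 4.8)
The plan is to prove this by induction on $\delta$, exploiting the fact that $\P_\delta$ is Cohen preserving (by Theorem~\ref{Shelah_preservation2}) and hence, by Theorem~\ref{thm2}, $\baire$-bounding. Fix $X \in \mathrm{id}(\mathcal I)$ in $V[G]$; I must find $Y \in \mathrm{id}(\mathcal I) \cap V$ with $X \subseteq^* Y$ (equivalently $X \subseteq Y$, since $\mathrm{id}(\mathcal I)$ is closed under finite modifications). Recall $X \in \mathrm{id}(\mathcal I)$ means: for every $g' \in \mathrm{FF}(\mathcal I)$ there is $g \supseteq g'$ in $\mathrm{FF}(\mathcal I)$ with $\mathcal I^g \cap X$ finite. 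The key observation is that since $\mathcal I \in V$ is countable under $\CH$ — wait, $\mathcal I$ need not be countable, but the relevant combinatorics happen over the countable set $\mathrm{FF}(\mathcal I)$ only after we localize; more precisely, for the purpose of witnessing $X \in \mathrm{id}(\mathcal I)$, by Lemma~\ref{lemma1}(3) and the fact that $X$ is a single set, it suffices to work inside a countable subfamily $\mathcal J \in [\mathcal I]^{\le\omega} \cap V$, so that $\mathrm{FF}(\mathcal J)$ is a countable object in $V$.

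The heart of the argument is to encode the statement ``$X \in \mathrm{id}(\mathcal I)$'' as membership in, or avoidance of, a suitable Borel meager (or dense open) set, and then invoke Cohen preservation via Theorem~\ref{Cohenequiv}(4). Concretely, I would proceed as follows. Work with a countable $M \prec H_\theta$ in $V$ with $\mathcal I, \P_\delta \in M$, and first reduce to the situation where $X$ is Cohen-generic-like over $M$: using $\baire$-boundedness and a fusion/bookkeeping argument one shows that the name for $X$ can be ``captured'' so that, after strengthening a condition, the relevant trace of $X$ on the blocks $\mathcal I^g$ (for $g \in \mathrm{FF}(\mathcal J) \cap M$) is controlled by ground-model data. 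The natural device: consider the function $t \mapsto$ (least $n$ such that $\mathcal I^{g} \cap X \subseteq n$ for the appropriate extension $g$ of $t$) as a real in $V[G]$; $\baire$-bounding gives a ground-model dominating function $h$, and then $Y := \bigcup \{ \mathcal I^{g} \cap h(g) \}$ — more carefully, $Y$ the union over $g \in \mathrm{FF}(\mathcal J)$ of $\mathcal I^g \cap h(g)$ together with suitable complementary blocks — is a ground-model set lying in $\mathrm{id}(\mathcal I)$ with $X \subseteq^* Y$. One must check that this $Y$ genuinely belongs to $\mathrm{id}(\mathcal I)$ in $V$: this uses that $\mathcal I$ is (densely) maximal in $V$, so that the density filter and the density ideal partition $P(\omega)$ appropriately (Lemma~\ref{lemma0}).

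The main obstacle I anticipate is the interaction between the induction on $\delta$ and the density-ideal combinatorics at limit stages: the statement ``$\mathrm{id}(\mathcal I)$ is generated by its ground model trace'' is exactly the kind of $\Pi^1_1$-over-countable-parameters assertion that should pass through countable support limits via Shelah's machinery, but one needs to phrase it as a genuine preservation property amenable to Theorem~\ref{Shelah_preservation2}, or else re-derive it directly from Cohen preservation of $\P_\delta$ (which is already in hand) plus $\baire$-bounding. I would favour the latter: prove the lemma in one shot for $\P_\delta$ using only that $\P_\delta$ is proper, Cohen preserving, and $\baire$-bounding, thereby avoiding a separate inductive treatment of limits. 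The genuinely delicate point is then verifying that the ground-model set $Y$ produced by the bounding argument lies in $\mathrm{id}(\mathcal I)$ and not merely in some larger ideal — this requires the density filter $\mathrm{fil}(\mathcal I)$ and the density ideal to interact correctly, and is where dense maximality (preserved by hypothesis, though here we only need it in $V$) and the $P$-set property of $\mathrm{fil}(\mathcal I)$ will be used.
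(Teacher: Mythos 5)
Your high-level strategy --- reduce to a countable subfamily $\mathcal J\subseteq\mathcal I$ in $V$ via properness, then argue in one shot from the fact that $\P_\delta$ is proper and Cohen preserving rather than inducting on $\delta$ --- is exactly the paper's. However, the core construction of the ground-model set $Y$ is where your sketch has a genuine gap. You propose to dominate the function sending $g$ to the least $n$ with $\mathcal I^g\cap X\subseteq n$ ``for the appropriate extension $g$'' and to take $Y$ as a union of truncated blocks $\mathcal I^g\cap h(g)$ plus unspecified ``complementary blocks.'' The problem is that $\baire$-bounding only controls functions into $\omega$; it does not tell you, in $V$, \emph{which} extensions $g\supseteq g'$ witness that $\mathcal J^g\cap X$ is finite. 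That set of witnesses is a new dense subset of $\mathrm{FF}(\mathcal J)\cong 2^{<\omega}$, and capturing a ground-model dense subset of it is precisely what Cohen preservation is for --- it is the main tool here, not an auxiliary one. Moreover, your proposed $Y$ is not visibly in $\mathrm{id}(\mathcal I)$, and your appeal to dense maximality and Lemma~\ref{lemma0} to rescue this is a symptom of the wrong construction: the lemma needs no maximality of $\mathcal I$ at all, only that $\P_\delta$ is proper and Cohen preserving.

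The missing idea is the following. Using that ``finite'' may be replaced by ``empty'' in the definition of $\mathrm{id}(\mathcal I)$, the set $\dot D=\{g\in\mathrm{FF}(\mathcal J)\mid \mathcal J^g\cap\dot X=\emptyset\}$ is forced to be dense open in $2^{<\omega}$. Cohen preservation yields $r\leq q$ and a dense $E\in V$ with $r\forces\check E\subseteq\dot D$. Set $Y=\bigcap_{h\in E}(\omega\setminus\mathcal J^h)$. Then $Y\in\mathrm{id}(\mathcal J)\subseteq\mathrm{id}(\mathcal I)$ directly from density of $E$ (each $g'$ extends to some $g\in E$ with $Y\cap\mathcal J^g=\emptyset$), and $r\forces\dot X\subseteq\check Y$ because $\dot X\subseteq\bigcap_{g\in\dot D}(\omega\setminus\mathcal J^g)$ and $E\subseteq\dot D$. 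No bounding of a ``least $n$'' function, no complementary blocks, and no maximality are needed.
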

Dually the lemma states that in $V[G]$ the filter $\mathrm{fil}(\mathcal I)$ is generated by $\mathrm{fil}(\mathcal I) \cap V$.

\begin{proof}
It does not matter what $\P_\delta$ is here, just that it is proper and Cohen preserving. Let $p \in \P_\delta$ and let $\dot{X}$ be a $\P_\delta$-name for an infinite subset of $\omega$. Suppose that $p \forces \dot{X} \in {\rm id}(\mathcal I)$. We need to find a ground model $Y \in [\omega]^\omega\cap{\rm id}(\mathcal I)$ and an $r \leq p$, so that $r \forces \dot{X} \subseteq \check{Y}$. Towards this, use the properness of $\P_\delta$ to find a $q \leq p$ and a countable $\mathcal J \subseteq \mathcal I$, so that $q \forces \dot{X} \in {\rm id}(\mathcal J)$. This is possible since every element of the ideal is witnessed by a countable subfamily $\mathcal J \subseteq \mathcal I$ and by the countable covering property of proper forcing notions, such a $\mathcal J$ can be found in $V$. Since $\mathcal J$ is countable, we can associate $\mathrm{FF}(\mathcal J)$ with $2^{<\omega}$. Let $\dot{D}$ be the $\P_\delta$-name for the dense open subset of $2^{<\omega}$ defined by 
$$q \forces \check{g} \in \dot{D}\hbox{ if and only if }\mathcal J^g \cap \dot{X} = \emptyset.$$ 
Since $\P_\delta$ is Cohen preserving, there is an $r \leq q$ and a dense $E \subseteq 2^{<\omega}$ so that $r \forces \check{E} \subseteq \dot{D}$. Let $Y = \bigcap_{h \in E} (\omega \setminus \mathcal J^h)$. Observe that $Y \in {\rm id}(\mathcal J)$ and hence $Y \in {\rm id}(\mathcal I)$. To see this, let $g' \in {\rm FF}(\mathcal J)$ be arbitrary and let $g \supseteq g'$ be in $E$. We have that $Y \subseteq \omega \setminus \mathcal J^g$ and hence $Y \cap \mathcal J^g = \emptyset$ which by definition means that $Y$ is in the ideal. Moreover

\begin{claim}
$r \forces \dot{X} \subseteq \check{Y}$. 
\end{claim}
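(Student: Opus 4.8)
The plan is to show that the ground-model set $Y = \bigcap_{h \in E}(\omega \setminus \mathcal J^h)$, which we have already verified lies in ${\rm id}(\mathcal J) \subseteq {\rm id}(\mathcal I)$, is forced by $r$ to contain $\dot X$. First I would recall the situation after passing below $q$: we have $q \forces \dot X \in {\rm id}(\mathcal J)$, which by the reformulation noted just after Definition~\ref{density.filter} (replacing ``finite'' by ``empty'' is harmless since $\mathcal J$ is infinite) means that $q$ forces: for every $g' \in {\rm FF}(\mathcal J)$ there is $g \supseteq g'$ in ${\rm FF}(\mathcal J)$ with $\mathcal J^g \cap \dot X = \emptyset$. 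This is exactly the statement that $\dot D$, as defined, is forced by $q$ to be a dense subset of $2^{<\omega}$ (identifying ${\rm FF}(\mathcal J)$ with $2^{<\omega}$); and it is plainly open, since if $\mathcal J^g \cap \dot X = \emptyset$ and $h \supseteq g$ then $\mathcal J^h \subseteq \mathcal J^g$ so $\mathcal J^h \cap \dot X = \emptyset$ too. Thus the appeal to Cohen preservation producing $r \leq q$ and a ground-model dense $E$ with $r \forces \check E \subseteq \dot D$ is legitimate.

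Next I would argue the containment pointwise. Work below $r$ in a generic extension $V[G]$ with $r \in G$. Fix $n \in \dot X^G$; I want $n \in Y$, i.e. $n \notin \mathcal J^h$ for every $h \in E$. Suppose toward a contradiction that $n \in \mathcal J^h$ for some $h \in E$. Since $r$ forces $\check E \subseteq \dot D$, we have $h \in \dot D^G$, which by definition of $\dot D$ means $\mathcal J^h \cap \dot X^G = \emptyset$. But $n \in \mathcal J^h$ and $n \in \dot X^G$, a contradiction. Hence $n \in \omega \setminus \mathcal J^h$ for every $h \in E$, so $n \in Y$. Since $n \in \dot X^G$ was arbitrary, $\dot X^G \subseteq Y$; as $G$ was an arbitrary generic containing $r$, this gives $r \forces \dot X \subseteq \check Y$, proving the claim. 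Combining with $Y \in {\rm id}(\mathcal I) \cap V$ and $Y \in [\omega]^\omega$ (which I should double-check: $Y$ is infinite because $Y \in {\rm id}(\mathcal J)$ and, being in the ideal, is compatible with the filter structure — more carefully, one should note that elements of a density ideal of an infinite independent family that we care about are the infinite ones, or simply observe that the lemma as stated only concerns infinite $\dot X$ and it suffices to dominate $\dot X$ by \emph{some} ground model element of the ideal, which need not itself be required infinite if one formulates the ideal to include finite sets) finishes the proof of the lemma, and the dual statement about $\mathrm{fil}(\mathcal I)$ follows by taking complements.

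I do not expect a serious obstacle here; the lemma is a direct translation of Cohen preservation into the language of the density ideal, and the only thing requiring a little care is the bookkeeping identifying ${\rm FF}(\mathcal J)$ with $2^{<\omega}$ in a way that makes ``$g \supseteq g'$'' correspond to end-extension in $2^{<\omega}$ — one fixes an enumeration $\mathcal J = \{J_k : k < \omega\}$ and sends $g$ to the binary string whose $k$-th bit is $g(J_k)$ when $J_k \in {\rm dom}(g)$, though since ${\rm FF}(\mathcal J)$ consists of \emph{partial} functions this is really a bijection with a tree of partial binary strings; any of the standard identifications works and the density/openness of $\dot D$ is unaffected. The one genuinely substantive input, that $\P_\delta$ is Cohen preserving, is not proved here but supplied by Theorem~\ref{Shelah_preservation2} together with the standing hypothesis that each $\dot{\Q}_\alpha$ is forced to be proper and Cohen preserving.
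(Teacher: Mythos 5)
Your argument is correct and is essentially the paper's own proof: the paper notes that in $V[G]$ one has $\dot X^G \subseteq \bigcap_{g \in \dot D^G}(\omega \setminus \mathcal J^g)$ and that $E \subseteq \dot D^G$ then yields $\dot X^G \subseteq Y$, which is exactly your pointwise verification spelled out. The extra bookkeeping you supply (openness of $\dot D$, the identification of ${\rm FF}(\mathcal J)$ with $2^{<\omega}$, the status of $Y$) is fine but pertains to the surrounding lemma rather than to this claim.
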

\begin{proof}
Let $r \in G$ be $\P_\delta$-generic over $V$. Note that by the way $\dot{D}^G$ is defined we have that $\dot{X} = \bigcap_{g \in \dot{D}^G} (\omega \setminus \mathcal J^g)$ and since $E \subseteq \dot{D}^G$ we're done.
\end{proof}
which completes the proof of the Lemma.
\end{proof}

The importance of the above can be summed up by the following:
\begin{lemma}\label{Ramseypreserving}
$\forces_\delta$``${\rm fil}(\mathcal I)$ is Ramsey"
\end{lemma}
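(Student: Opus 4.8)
The goal is to show that after forcing with $\P_\delta$, the density filter $\mathrm{fil}(\mathcal I)$ is both a $P$-filter and a $Q$-filter. The key input is Lemma~\ref{idealpreserving}, which tells us that in $V[G]$ the filter $\mathrm{fil}(\mathcal I)$ is \emph{generated} by its ground-model part $\mathrm{fil}(\mathcal I) \cap V$; combined with the fact that in $V$ the family $\mathcal I$ is selective, so that $\mathrm{fil}(\mathcal I)^V$ is Ramsey, and with the fact that $\P_\delta$ is $\baire$-bounding (which holds by Theorem~\ref{thm2}, since $\P_\delta$ is proper and Cohen preserving by Theorem~\ref{Shelah_preservation2}), this should suffice. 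Let me sketch the two halves.

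For the $P$-filter part: suppose $\{A_n \mid n<\omega\}$ is a countable subfamily of $\mathrm{fil}(\mathcal I)$ in $V[G]$. Since $\mathrm{fil}(\mathcal I)^{V[G]}$ is generated by its ground-model part, for each $n$ there is a ground-model set $B_n \in \mathrm{fil}(\mathcal I)\cap V$ with $B_n \subseteq A_n$; but I need the whole sequence $\langle B_n \mid n<\omega\rangle$ to be in $V$, which is exactly where properness (the countable covering property) enters — one uses it to capture the sequence of ``witnessing'' ground-model sets inside $V$, much as in the proof of Lemma~\ref{idealpreserving}. Then apply the $P$-filter property of $\mathrm{fil}(\mathcal I)$ in $V$ to get $B \in \mathrm{fil}(\mathcal I)^V$ with $B \subseteq^* B_n \subseteq A_n$ for all $n$; by Lemma~\ref{lemma1} (or just by $\mathrm{fil}(\mathcal I)^V \subseteq \mathrm{fil}(\mathcal I)^{V[G]}$, noting $\mathrm{fil}$ is monotone and ground-model sets in the filter stay in the filter) $B \in \mathrm{fil}(\mathcal I)^{V[G]}$, and $B$ is the required pseudo-intersection.

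For the $Q$-filter part: let $\langle I_n \mid n<\omega\rangle$ be a partition of $\omega$ into finite intervals in $V[G]$. Since $\P_\delta$ is $\baire$-bounding, there is a ground-model partition $\langle J_k \mid k<\omega\rangle$ into finite sets that is \emph{coarser} than (a bound for) $\langle I_n\rangle$, i.e.\ each $J_k$ is a union of consecutive $I_n$'s — concretely, dominate the function $n \mapsto \max I_n$ by a ground-model function and use it to build $\langle J_k\rangle \in V$. Apply the $Q$-filter property of $\mathrm{fil}(\mathcal I)$ in $V$ to this ground-model partition: get $A \in \mathrm{fil}(\mathcal I)^V$ with $|A \cap J_k| \le 1$ for all $k$. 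A standard bookkeeping argument then refines $A$ to $A'\subseteq A$ with $|A' \cap I_n| \le 1$ for all $n$ (since each $I_n$ sits inside a single $J_k$, the condition $|A\cap J_k|\le 1$ already gives $|A\cap I_n|\le 1$ — in fact no refinement is needed once the $J_k$ refine, only coarsen, so one should be careful to set up the direction correctly: we want $\langle J_k\rangle$ coarser than $\langle I_n\rangle$, and then $|A\cap J_k|\le 1 \Rightarrow |A\cap I_n|\le 1$). Finally $A\in \mathrm{fil}(\mathcal I)^{V[G]}$ since ground-model filter sets remain in the filter, so $A$ is the desired semiselector.

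The main obstacle is the $P$-filter case, specifically the step of pulling the sequence of ground-model lower bounds $\langle B_n\rangle$ into $V$ as a single object: this needs the countable covering property of proper forcing applied carefully (one works with a name for the sequence, finds a condition and a ground-model countable set of candidate sequences covering it, and then a further reading argument), mirroring the argument already carried out in Lemma~\ref{idealpreserving}. The $Q$-filter case is comparatively routine given $\baire$-bounding. One should also double-check the monotonicity/closure fact that a ground-model member of $\mathrm{fil}(\mathcal I)$ remains a member in $V[G]$ — this is immediate from the definition of $\mathrm{fil}$ together with the fact that $\mathcal I$ stays independent (indeed densely maximal) and that the relevant $\mathcal I^g$ are computed the same way, their infiniteness being absolute.
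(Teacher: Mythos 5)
Your argument is correct and follows essentially the same route as the paper: the $P$-filter half is exactly the paper's proof (Lemma~\ref{idealpreserving} to get ground-model lower bounds, the countable covering property of properness to capture the sequence of bounds in $V$, then the ground-model $P$-filter property, plus the absoluteness of membership of ground-model sets in $\mathrm{fil}(\mathcal I)$), and the $Q$-filter half is the standard ``$\baire$-bounding preserves $Q$-filters'' fact, which the paper simply asserts in one line. One small inaccuracy in your $Q$-part: a ground-model partition $\langle J_k\rangle$ cannot in general be literally \emph{coarser} than a $V[G]$-partition $\langle I_n\rangle$ (its endpoints would have to lie among the $V[G]$-endpoints); what domination actually gives is that each $I_n$ is contained in the union of at most two consecutive $J_k$'s, after which one takes semiselectors for the two interleaved coarsenings $\{J_{2k}\cup J_{2k+1}\}$ and $\{J_{2k+1}\cup J_{2k+2}\}$ and intersects them, using that $\mathrm{fil}(\mathcal I)$ is a filter. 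Equivalently, one can avoid partitions altogether via the characterization the paper uses later (footnote in the proof of Theorem~\ref{millerlite2}): a filter is a $Q$-set iff for every increasing $f$ there is $\{k_n\}$ in the filter with $f(k_n)<k_{n+1}$, a formulation for which domination by a ground-model function suffices immediately.
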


\begin{proof}
Since $\P_\delta$ is Cohen preserving, it is $\baire$-bounding and hence ${\rm fil}(\mathcal I)$ remains a Q-set in any generic extension of $V$ by $\P_\delta$. Thus, it remains to see that it is a P-set in any such extension as well. Let $\{\dot{X}_n \; | \; n < \omega\}$ be a countable family of names for elements of ${\rm fil}(\mathcal I)$. By Lemma \ref{idealpreserving}, we know that each $\dot{X}_n$ has a ground model almost subset $Y_n \in {\rm fil}(\mathcal I)$. By properness, there is a superset $\{Z_n \; | \; n < \omega\}$ in the ground model and, since ${\rm fil}(\mathcal I)$ is a P-set in the ground model, there is a pseudo-intersection $Z \in V \cap {\rm fil}(\mathcal I)$. But then $Z$ is a pseudo-intersection of the $\dot{X}_n$'s and so we are done.
\end{proof}

It follows from Lemma \ref{Ramseypreserving} that all we need to do, in order to prove Theorem \ref{mainthm11}, is  to show that dense maximality is preserved. This we will do momentarily though first we need to recall a preservation result due to Shelah (see~\cite[Lemma 3.2]{Sh92}):

\begin{theorem}\label{Shelah_preservation1}
Assume \textsf{CH}. Let $\delta$ be a limit ordinal and let $\langle\mathbb{P}_\alpha,\dot{\mathbb{Q}}_\alpha\mid\alpha<\delta\rangle$ be a countable support iteration of ${^\omega}\omega$-bounding proper posets. Let $\mathcal{F}\subseteq P(\omega)$ be a Ramsey set and let $\mathcal{H}$ be a subset of $P(\omega)\backslash\langle\mathcal{F}\rangle_{\mathrm{up}}$. If $V^{\mathbb{P}_\alpha}\vDash P(\omega)=\langle\mathcal{F}\rangle_{\mathrm{up}}\cup\langle\mathcal{H}\rangle_{\mathrm{dn}}$ for all $\alpha<\delta$ then $V^{\mathbb{P}_\delta}\vDash P(\omega)=\langle\mathcal{F}\rangle_{\mathrm{up}}\cup\langle\mathcal{H}\rangle_{\mathrm{dn}}$ as well.
\end{theorem}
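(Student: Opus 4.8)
This is a \emph{preservation theorem along countable support iterations} of the type developed in \cite{PIP}; what follows is the plan, abbreviating by $(\star)$ the property ``$P(\omega)=\langle\mathcal F\rangle_{\mathrm{up}}\cup\langle\mathcal H\rangle_{\mathrm{dn}}$''. The argument splits according to $\mathrm{cf}(\delta)$, and no genuine transfinite induction is needed since $(\star)$ is assumed at \emph{every} stage below $\delta$. If $\mathrm{cf}(\delta)>\omega$ the conclusion is immediate from the reflection phenomenon for countable support iterations of proper posets: given a $\mathbb P_\delta$-name $\dot X$ for a subset of $\omega$ and a countable $N\prec H_\theta$ with $\dot X,\mathbb P_\delta\in N$, the set $N\cap\delta$ is countable, hence $N\cap\delta\subseteq\beta$ for some $\beta<\delta$; taking an $(N,\mathbb P_\delta)$-generic condition $q$, whenever $q\in G$ every value of $\dot X^G$ is decided by a condition in $G\cap N\subseteq\mathbb P_\beta$, so $\dot X^G\in V^{\mathbb P_\beta}$, which satisfies $(\star)$.

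So assume $\mathrm{cf}(\delta)=\omega$ and fix a cofinal sequence $\langle\delta_n\mid n<\omega\rangle$. Writing $\mathbb P_n:=\mathbb P_{\delta_n}$ and letting $\dot{\mathbb R}_n$ name the tail iteration $\mathbb P_{[\delta_n,\delta_{n+1})}$ --- which is forced proper and $\omega^\omega$-bounding, the latter by the standard iteration theorem for $\omega^\omega$-bounding proper posets --- we may relabel and assume $\delta=\omega$, so that $\mathbb P_\omega$ is the countable support limit of an iteration $\langle\mathbb P_n,\dot{\mathbb R}_n\mid n<\omega\rangle$ of proper $\omega^\omega$-bounding forcings with $V^{\mathbb P_n}\models(\star)$ for all $n$. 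Fix a $\mathbb P_\omega$-name $\dot X$ for a subset of $\omega$ and a condition $p$: if some $q\le p$ forces $\dot X\in\langle\mathcal H\rangle_{\mathrm{dn}}$ we are done, so assume $p\Vdash\dot X\notin\langle\mathcal H\rangle_{\mathrm{dn}}$; by a density argument below $p$ it then suffices to produce $q\le p$ and a \emph{ground model} $F\in\mathcal F$ with $q\Vdash\check F\subseteq^*\dot X$.

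Fix $\theta$ large and a countable $N\prec H_\theta$ containing $p,\dot X,\mathcal F,\mathcal H$ and the iteration. The plan is to build a fusion sequence $p=p_0\ge p_1\ge p_2\ge\cdots$ of conditions below $p$ with $p_{n+1}\hook n=p_n\hook n$, strengthening the $n$-th coordinate at step $n$ so that the limit $q:=\bigcup_n(p_n\hook n)$ is an $(N,\mathbb P_\omega)$-generic condition below $p$ (via the iteration theorem for properness), while producing in $V$, stage by stage, an interval partition $\omega=\bigsqcup_nI_n$ and finite sets $A_n\subseteq I_n$ with $p_{n+1}\Vdash\check A_n\subseteq\dot X$. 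At step $n$ one uses $\omega^\omega$-bounding to pull $I_n$ and the trace $A_n$ back into $V$, and the hypothesis that $V^{\mathbb P_n}\models(\star)$ to keep the $A_n$ non-empty cofinally often: since $p_n$ forces $\dot X$ --- hence also the $V^{\mathbb P_n}$-set $\tilde X_n$ of those $k$ not yet forced out of $\dot X$ below $p_n$ --- not to be almost contained in any member of $\mathcal H$, the property $(\star)$ in $V^{\mathbb P_n}$ provides a member of $\mathcal F$ almost contained in $\tilde X_n$. The key point is to carry out the fusion so that these stagewise witnesses remain compatible with a \emph{single} fixed ground model $F\in\mathcal F$; this is where the full Ramsey property of $\mathcal F$ is used --- the $P$-set half to amalgamate the witnesses into one pseudo-intersection lying below $\bigcup_nA_n$, and the $Q$-set half to thin through the partition $\langle I_n\rangle$ so that the amalgam is genuinely a member of $\mathcal F$. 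For such an $F$ one has $q\Vdash\check F\subseteq^*\bigcup_nA_n\subseteq\dot X$, as required.

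The main obstacle is precisely this coordination inside the fusion: one must interleave (i) the coordinatewise construction of the generic condition over $N$, using properness of the $\mathbb R_n$; (ii) the appeals to $\omega^\omega$-bounding that make the intervals $I_n$ and the finite traces $A_n$ ground model objects; and (iii) the applications of $(\star)$ in the intermediate extensions $V^{\mathbb P_n}$, arranged so that the data produced at the different stages amalgamate into one member of the filter. Identifying the strengthened statement that is actually carried through the fusion --- equivalently, seeing why Ramseyness of $\mathcal F$, and not the $P$-set or $Q$-set property alone, is what permits this amalgamation --- is the heart of Shelah's argument; the cofinality-$\omega$ reduction and the reflection argument for $\mathrm{cf}(\delta)>\omega$ are routine by comparison.
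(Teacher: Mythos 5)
First, a contextual point: the paper does not prove this statement at all --- it is quoted as Shelah's \cite[Lemma 3.2]{Sh92} and used as a black box --- so there is no in-paper argument to compare against. Judged on its own terms, your proposal is an outline rather than a proof, and the gap is exactly where you say it is: the ``coordination inside the fusion'' that you defer to ``the heart of Shelah's argument'' is the entire content of the lemma. The reduction to $\mathrm{cf}(\delta)=\omega$ and the reflection argument for uncountable cofinality are indeed routine, but everything after ``Fix $\theta$ large\dots'' is a description of what a proof would have to accomplish, not a construction that accomplishes it.

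Beyond being incomplete, the sketch as written has a directional problem that would have to be repaired even to start the fusion. The set $\tilde X_n$ of integers ``not yet forced out of $\dot X$ below $p_n$'' is an \emph{outer} approximation: one gets $p_n\Vdash\dot X\subseteq\check{\tilde X}_n$, so applying $(\star)$ in $V^{\mathbb P_n}$ yields $F_n\in\mathcal F$ with $F_n\subseteq^*\tilde X_n$, which bounds $\dot X$ from above and says nothing yet about which elements of $F_n$ are actually \emph{in} $\dot X$. The missing mechanism is the one the paper does spell out in its proof of Theorem~4.1 (the Miller Lite case): one must pre-process the condition so that membership of $\check k$ in $\dot X$ is \emph{decided} by the finitely many nodes at each fusion level, use $\omega^\omega$-bounding to find a ground-model $f$ dominating the decision levels, and then use the $Q$-set property to thin a single $C\in\mathcal F$ to a $C^*=\{k_n\}$ with $f(k_n)<k_{n+1}$, so that at stage $n$ of the fusion every relevant node can be extended to force $\check k_{n+1}\in\dot X$ while respecting $\leq_n$. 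Without this step the claimed conclusion $q\Vdash\check F\subseteq^*\bigcup_n A_n\subseteq\dot X$ does not follow from what you have written. You also never indicate where $\mathsf{CH}$ enters, and in the iteration setting the quotients $\mathbb P_\omega/\mathbb P_n$ are not trees, so ``fusion'' must be replaced by the interleaved $(N,\mathbb P_\omega)$-genericity construction you allude to in (i)--(iii); making those three threads actually mesh is the lemma, and it remains to be done.
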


Now we can complete the proof of Theorem \ref{mainthm11}:

\begin{proof}[Proof of \ref{mainthm11}]
The proof is by induction on $\delta$. The successor step is by assumption and so we focus on the limit step. Inductively we have that if $\beta < \delta$ and $G_\beta \subseteq \P_\beta$ is generic over $V$ then $$V[G_\beta] \models P(\omega) = \langle {\rm fil}(\mathcal I) \cap V\rangle_{\rm up} \cup \langle \omega \setminus \mathcal I^g\; | \; g \in \mathsf{FF}(\mathcal I)\rangle_{\rm dn}.$$ But then by Theorem \ref{Shelah_preservation1} plus the fact that ${\rm fil}(\mathcal I)$ is a Ramsey filter in $V^{\P_\delta}$ we get $$\forces_\delta P(\omega) = \langle {\rm fil}(\mathcal I) \cap V\rangle_{\rm up} \cup \langle \omega \setminus \mathcal I^g\; | \; g \in \mathsf{FF}(\mathcal I)\rangle_{\rm dn}.$$
By Lemma \ref{lemma0}, this is exactly what we needed to show.
\end{proof}

\section{An Application: Miller Lite Forcing}

In this section we give an application of Theorem \ref{mainthm11}. Recall from \cite{Ges06} that Miller Lite forcing, denoted $\mathbb{ML}$, consists of finitely branching trees $T \subseteq \omega^{<\omega}$ so that for every $s \in T$ and $n < \omega$ there is a $t \in T$ with $t \supseteq s$ which has at least $n$ many immediate successors. The order is inclusion. We will show the following.

\begin{theorem}\label{millerlite2}
If $\CH$ holds then $\mathbb{ML}$ preserves the dense maximality of any selective independent family. 
%\label{millerlite2}
\end{theorem}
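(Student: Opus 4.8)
The plan is to prove preservation of dense maximality for a single step of $\mathbb{ML}$, i.e. work with a generic extension $V[G]$ by $\mathbb{ML}$, fix a name $\dot{X}$ for an infinite subset of $\omega$, a condition $T \in \mathbb{ML}$, and an $g' \in \mathrm{FF}(\mathcal{I})$ (which lives in $V$ since $\mathcal{I} \subseteq V$), and find $q \le T$ and $g \supseteq g'$ in $\mathrm{FF}(\mathcal{I})$ with $q \forces ``\mathcal{I}^g \cap \dot{X}$ or $\mathcal{I}^g \setminus \dot{X}$ is finite''. By Lemma~\ref{lemma0} it is equivalent (and more convenient) to show that in $V[G]$ we have $P(\omega) = \mathrm{fil}(\mathcal{I}) \cup \langle \omega \setminus \mathcal{I}^g \mid g \in \mathrm{FF}(\mathcal{I})\rangle_{\mathrm{dn}}$, so fix $\dot{X}$ naming an element of $P(\omega)$ that a condition $T$ forces to be outside $\langle \omega \setminus \mathcal{I}^g\rangle_{\mathrm{dn}}$, and I would aim to find $q \le T$ forcing $\dot X \in \mathrm{fil}(\mathcal I)$.

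The strategy follows the familiar ``fusion against a selective independent family'' argument from \cite{DefMIF}, \cite{FM19}, \cite{CS}. First I would recall the fusion structure on $\mathbb{ML}$: one builds a decreasing sequence $T = T_0 \ge T_1 \ge \cdots$ with a sequence of finite subsets of stems (a ``front'' $F_n$ of nodes where we have split sufficiently, respecting the requirement that below every node there is unbounded branching) so that the intersection $T_\omega = \bigcap_n T_n$ is again a Miller Lite condition; the key point is that finite branching is preserved in the fusion and the unbounded-branching requirement can be maintained by always promising to split more later. Enumerate $\mathrm{FF}(\mathcal{I})$ (or a suitable countable dense subset, using $\CH$ and Lemma~\ref{lemma1}(3) to reduce to a countable subfamily $\mathcal{J} \subseteq \mathcal{I}$ containing $\mathrm{dom}(g')$) as $\langle g_n \mid n < \omega\rangle$, and at stage $n$, working below each node $t$ of the current front, decide as much of $\dot{X}$ on $\mathcal{J}^{g_n}$ as possible: either find an extension forcing $\mathcal{J}^{g_n} \setminus \dot X$ finite — in which case $\dot X$ would be in $\mathrm{fil}(\mathcal I)$ witnessed via $g_n$ and we'd be done along that branch — or, using densely maximality in $V$ and the $\baire$-bounding / Cohen preservation of $\mathbb{ML}$ (Theorem~\ref{thm2}, noting $\mathbb{ML}$ is proper and Cohen preserving, which should be verified separately or is known from \cite{Ges06}), arrange that the ``trace'' of $\dot X$ on the various pieces $\mathcal{J}^{g}$ is controlled by a ground-model set. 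After fusion, one reads off a single $g^* \supseteq g'$ from the construction (using density of the witnessing $g$'s in $\mathrm{FF}(\mathcal I)$ guaranteed by dense maximality in $V$) so that $T_\omega \forces ``\mathcal{I}^{g^*} \cap \dot X$ is finite'', placing $\dot X$ in $\langle \omega \setminus \mathcal I^g\rangle_{\mathrm{dn}}$; the assumption that $T$ forced $\dot X \notin \langle \omega \setminus \mathcal I^g\rangle_{\mathrm{dn}}$ then forces the other alternative at every relevant stage, yielding $\dot X \in \mathrm{fil}(\mathcal I)$.

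I expect the main obstacle to be the interaction between the finite-branching constraint of $\mathbb{ML}$ and the need to decide $\dot X$ on all of $\mathcal{I}^{g}$ simultaneously across the infinitely many nodes of the fusion tree: unlike Sacks forcing, where each node has exactly two successors and one can do a clean ``$2^n$-sized'' bookkeeping, in $\mathbb{ML}$ a node can have arbitrarily many successors, so at each fusion step I must thin each successor's subtree while still meeting the unbounded-splitting requirement, and I must ensure the diagonal set I extract (e.g. $\bigcap_{h} (\omega \setminus \mathcal{J}^h)$ over the finitely many $h$ decided so far, as in the proof of Lemma~\ref{idealpreserving}) genuinely lands in $\mathrm{id}(\mathcal{I})$ and genuinely almost-contains the relevant part of $\dot X$ on the fusion condition. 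Concretely, the delicate point is a ``reading of names'' lemma: for each node $t$ on the current front and each relevant $g$, either some $\mathbb{ML}$-extension of $T_n$ below $t$ forces $\mathcal{J}^g \setminus \dot X$ finite, or (by a density/selectivity argument in $V$, exactly as in \cite[Lemma 5.x]{BFS21} or the analogous lemma in \cite{CS}) we may refine so that $\mathcal J^g \cap \dot X \subseteq^* Y$ for a ground-model $Y \in \mathrm{id}(\mathcal J)$; pushing this through the Miller-Lite fusion, rather than a Sacks fusion, is where the real work lies. Once that lemma is in place, the limit-stage bookkeeping and the final appeal to Lemma~\ref{lemma0} are routine, and Theorem~\ref{millerlite2} follows; combined with properness and Cohen preservation of $\mathbb{ML}$ this feeds into Theorem~\ref{mainthm11} to give iterated preservation.
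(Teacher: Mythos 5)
Your setup is the right one and matches the paper's: reduce, via Lemma~\ref{lemma0}, to showing that whenever $p$ forces $\dot X\nsubseteq\omega\setminus\mathcal I^{g}$ for all $g\in\mathrm{FF}(\mathcal I)$, some $q\leq p$ forces $\dot X\in\mathrm{fil}(\mathcal I)$; and you correctly flag that properness and Cohen preservation of $\mathbb{ML}$ are separate (known) facts. But the engine of the proof is missing, and the dichotomy you propose in its place does not work. Forcing ``$\mathcal J^{g_n}\setminus\dot X$ is finite'' for a single $g_n$ does not witness $\dot X\in\mathrm{fil}(\mathcal I)$ (membership in the density filter requires such a $g$ \emph{densely} above every $g'$), so you are not ``done along that branch''; and even if your dichotomy's first horn succeeded at every stage, you would be left needing one condition that simultaneously forces infinitely many $\Sigma^0_2$ statements ``$\mathcal I^{g}\setminus\dot X$ is finite'', which a node-by-node fusion of mere extensions (not $\leq_n$-refinements) does not produce. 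The second horn is also off: concluding that $\mathcal J^{g}\cap\dot X\subseteq^* Y$ for some ground-model $Y\in\mathrm{id}(\mathcal J)$ does not yield ``$\mathcal I^{g^*}\cap\dot X$ is finite'' for any $g^*$, since ideal membership is not finiteness. So the final ``read off $g^*$'' step has no support.

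The idea you are missing is that one never certifies $\dot X\in\mathrm{fil}(\mathcal I)$ $g$-by-$g$: instead one forces a single ground-model set $C^*\in\mathrm{fil}(\mathcal I)$ to satisfy $C^*\subseteq\dot X$, and upward closure of the filter finishes the proof. Concretely: fuse to a $p$ that is \emph{preprocessed}, i.e.\ $p_t$ decides $\dot X\cap n$ for each $t\in p^n$; set $Y_t=\{n\mid p_t\nVdash\check n\notin\dot X\}$, so $p_t\forces\dot X\subseteq\check Y_t$; dense maximality of $\mathcal I$ in $V$ (via Lemma~\ref{lemma0}) forces each $Y_t$ into $\mathrm{fil}(\mathcal I)$, since otherwise $Y_t$, and hence $\dot X$ below $p_t$, would be almost contained in some $\omega\setminus\mathcal I^{g}$; the $P$-filter property (plus generation by ground-model sets, Lemma~\ref{idealpreserving}) gives one $C\in\mathrm{fil}(\mathcal I)\cap V$ with $C\subseteq^* Y_t$ for all $t$; the $Q$-filter property thins $C$ to $C^*=\{k_n\}$ growing fast enough relative to the fronts $p^n$; and a second fusion then steers into nodes forcing $k_{n+1}\in\dot X$ at each level (possible exactly because $k_{n+1}\in Y_t$ for all relevant $t$), yielding $q\forces\check C^*\subseteq\dot X$. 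This is where the selectivity hypothesis is actually consumed, and it is absent from your sketch; the finite-vs-unbounded branching issue you worry about is, by contrast, handled routinely by the standard $\leq_n$ fusion on $\mathbb{ML}$.
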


The forcing notion $\mathbb{ML}$ is proper, in fact Axiom A (\cite[Lemma 6.3]{Ges06}), has the Sacks property (\cite[Lemma 5.1]{OV18}) and hence
by Lemma~\ref{Sacks.implies.Cohen} is Cohen preserving. As a consequence of our preservation theorem we obtain: 

\begin{corollary}
In the Miller Lite model $\mfi = \aleph_1$.
\end{corollary}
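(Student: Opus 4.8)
The plan is to assemble the pieces that have already been put in place. The Miller Lite model is, by definition, the result of starting from a model of $\CH$ (say $L$ or any model where a selective independent family exists by the preceding Fact) and performing a countable support iteration of $\mathbb{ML}$ of length $\omega_2$. The goal is to produce a maximal independent family of size $\aleph_1$ in this model, which gives $\mfi = \aleph_1$; combined with the fact that $\mathbb{ML}$ adds reals so $\mfc = \aleph_2$ in the extension, this yields the desired $\mfi < \mfc$ but the corollary as stated only asserts $\mfi = \aleph_1$.

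First I would fix, in the ground model $V \models \CH$, a selective independent family $\mathcal I$ of size $\aleph_1$; such a family exists by the Fact attributed to Shelah. Next I would invoke Theorem~\ref{millerlite2} together with the remarks immediately following it: $\mathbb{ML}$ is proper (indeed Axiom A, by~\cite[Lemma 6.3]{Ges06}), it has the Sacks property (by~\cite[Lemma 5.1]{OV18}), hence it is Cohen preserving by Lemma~\ref{Sacks.implies.Cohen}, and by Theorem~\ref{millerlite2} it preserves the dense maximality of $\mathcal I$. Moreover all these properties are forced at every stage of the iteration, since the iterand is always (a name for) $\mathbb{ML}$ as computed in the relevant intermediate model, and each of ``proper'', ``Cohen preserving'', ``preserves dense maximality of $\mathcal I$'' is a statement about $\mathbb{ML}$ that holds in every model of $\CH$ — and $\CH$ is preserved at every stage of a countable support iteration of proper forcings of length $\leq \omega_2$ over a model of $\CH$ (this is the standard bookkeeping/$\aleph_2$-c.c.\ argument).

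Therefore the hypotheses of Theorem~\ref{mainthm11} are met with $\delta = \omega_2$: we have a countable support iteration $\langle \P_\alpha, \dot{\Q}_\alpha \mid \alpha < \omega_2 \rangle$ of proper forcing notions with $\forces_\alpha$``$\dot{\Q}_\alpha$ is Cohen preserving'' and $\forces_\alpha$``$\dot{\Q}_\alpha$ preserves the dense maximality of $\mathcal I$''. Applying Theorem~\ref{mainthm11}, $\P_{\omega_2}$ preserves that $\mathcal I$ is selective, and in particular that $\mathcal I$ is maximal. So in $V[G]$, the Miller Lite model, $\mathcal I$ is a maximal independent family of cardinality $\aleph_1$, whence $\mfi \leq \aleph_1$. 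Since $\mfi \geq \aleph_1$ trivially (no countable family can be maximal independent — or more simply $\mfi \geq \mfd \geq \aleph_1$), we conclude $\mfi = \aleph_1$ in the Miller Lite model.

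I do not expect any genuine obstacle here: the real content has been front-loaded into Theorem~\ref{mainthm11} and Theorem~\ref{millerlite2}. The only point requiring a sentence of care is the verification that the hypotheses of Theorem~\ref{mainthm11} literally apply, i.e.\ that $\CH$ persists through the intermediate stages so that Theorem~\ref{millerlite2} (whose hypothesis is $\CH$) can be invoked at each step — this is routine for countable support iterations of proper posets of length $\omega_2$ over a model of $\CH$, using that each $\P_\alpha$ has size $\leq \aleph_1$ for $\alpha < \omega_2$ and hence is $\aleph_2$-c.c., so no new reals are added ``too fast''. If one wanted to be maximally careful one could instead phrase the iterand uniformly and note that ``being $\mathbb{ML}$ and having the Sacks property and preserving dense maximality of $\mathcal I$'' is absolute enough between the successive models; either way this is bookkeeping, not mathematics.
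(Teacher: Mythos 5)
Your proof is correct and follows essentially the same route the paper intends: fix a selective independent family of size $\aleph_1$ in the $\CH$ ground model, use Theorem~\ref{millerlite2} together with the properness, Sacks property, and hence Cohen preservation of $\mathbb{ML}$ to verify the hypotheses of Theorem~\ref{mainthm11}, and conclude that the family stays maximal. Your added remark that $\CH$ persists at intermediate stages (so that Theorem~\ref{millerlite2} applies iteratively) is exactly the routine bookkeeping the paper leaves implicit.
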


Here by ``the Miller Lite model" we mean any model obtained by iteratively forcing with $\mathbb{ML}$ with countable support over a ground model satisfying $\GCH$. The model was first studied by Geschke (see~\cite{Ges06}) in his investigation of continuous colorings of $2^\omega$. Amongst other things, it is shown in~\cite{Ges06} that in the Miller Lite model $\mathfrak{hm} < \mathfrak{l}_{n ,\omega}$ holds for all $n < \omega$\footnote{Since we will not use these cardinals here, and their definitions are a little involved, we omit them and refer the reader to \cite{Ges06}.}. As a result we obtain the following.

\begin{corollary}\label{millerlite2.corollary}
It is consistent that $\mfi = \mathfrak{hm} < \mathfrak{l}_{n, \omega}$ for all $n < \omega$.
\end{corollary}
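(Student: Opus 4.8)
The plan is to assemble the pieces already in place. Start from a ground model $V$ of $\GCH$ and let $\langle \P_\alpha, \dot{\Q}_\alpha \mid \alpha < \omega_2\rangle$ be the countable support iteration with $\forces_\alpha ``\dot{\Q}_\alpha = \mathbb{ML}"$; write $\P = \P_{\omega_2}$ and call $V[G]$ (for $G \subseteq \P$ generic) the Miller Lite model. By \cite[Lemma 6.3]{Ges06} $\mathbb{ML}$ is proper (indeed Axiom A), by \cite[Lemma 5.1]{OV18} it has the Sacks property, and hence by Lemma~\ref{Sacks.implies.Cohen} it is Cohen preserving; moreover by Theorem~\ref{millerlite2}, assuming $\CH$ in the relevant intermediate model, $\mathbb{ML}$ preserves the dense maximality of any selective independent family. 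These three properties are exactly the hypotheses demanded of each iterand in Theorem~\ref{mainthm11}.

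Next I would fix, using $\CH$ in $V$ (which holds since $\GCH$ does), a selective independent family $\mathcal I \in V$ with $|\mathcal I| = \aleph_1$; such a family exists by Shelah's theorem (\cite{Sh92}, recorded in the Fact above). Applying Theorem~\ref{mainthm11} with $\delta = \omega_2$ yields that $\P$ preserves the selectivity, and in particular the maximality, of $\mathcal I$. Hence in $V[G]$ the family $\mathcal I$ is a maximal independent family of size $\aleph_1$, so $\mfi = \aleph_1$ there; this is precisely the content of the preceding Corollary. Here one also uses the standard fact that the countable support iteration of proper forcings whose iterands have size at most $2^{\aleph_0}$, over a model of $\GCH$, has the $\aleph_2$-chain condition, so that cardinals and cofinalities are preserved and $\mathfrak c = \aleph_2$ in $V[G]$.

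Finally I would invoke Geschke's analysis of the Miller Lite model from \cite{Ges06}: in $V[G]$ one has $\mathfrak{hm} = \aleph_1$ and $\mathfrak{l}_{n,\omega} = \aleph_2$ for every $n < \omega$ (these are the characteristics whose separation motivated the model, so in particular $\mathfrak{hm}$ is forced down to $\aleph_1$ while each $\mathfrak{l}_{n,\omega}$ is forced up to $\mathfrak c = \aleph_2$). Combining with $\mfi = \aleph_1$ from the previous paragraph gives, in $V[G]$,
$$\mfi = \mathfrak{hm} = \aleph_1 < \aleph_2 = \mathfrak{l}_{n,\omega} \quad \text{for all } n < \omega,$$
which is the asserted consistency statement.

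There is no real obstacle here beyond bookkeeping: the substantive work is done in Theorem~\ref{millerlite2} (preservation of dense maximality by $\mathbb{ML}$) and Theorem~\ref{mainthm11} (the general limit-stage preservation), together with the imported values of $\mathfrak{hm}$ and $\mathfrak{l}_{n,\omega}$ from \cite{Ges06}. The only points that need a word of care are checking that the intermediate models continue to satisfy $\CH$ so that Theorem~\ref{millerlite2} applies at each successor step — which is immediate since the iteration has length $\omega_2$ and each proper iterand has size $\aleph_1$, keeping $2^{\aleph_0} = \aleph_1$ below stage $\omega_2$ — and that the final continuum is $\aleph_2$, as noted above.
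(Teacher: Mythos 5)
Your proposal is correct and follows essentially the same route as the paper: the corollary is obtained by combining the fact that $\mfi = \aleph_1$ in the Miller Lite model (via Theorem~\ref{millerlite2}, Lemma~\ref{Sacks.implies.Cohen} and Theorem~\ref{mainthm11}) with Geschke's computation in \cite{Ges06} that $\mathfrak{hm} < \mathfrak{l}_{n,\omega}$ there. The extra bookkeeping you supply (preservation of $\CH$ at intermediate stages, the $\aleph_2$-c.c., and $\mathfrak c = \aleph_2$ in the final model) is left implicit in the paper but is exactly the standard argument intended.
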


The rest of this section is devoted to proving Theorem \ref{millerlite2}. We begin with some terminology concerning Miller lite trees. Fix a $p \in \mathbb{ML}$. If $t \in p$ let $p_t = \{s \in p \; | \; s \subseteq t \; {\rm or} \; t \subseteq s\}$ and let ${\rm succ}_p(t)$ denote the set of immediate successors of $t$ in $p$.

\begin{definition}
For $n < \omega$ let 
$$p^n = \{t \in p \; | \; t \in {\rm succ}_p(s)\hbox{ for some }s \in p\hbox{ that is minimal with }|{\rm succ}_p(s)| > n\}.$$ For $p, q \in \mathbb{ML}$ write $p \leq_n q$ if $p \leq q$ and $p^n = q^n$.
\end{definition}

With this terminology, fusion works as usual. Namely if $\{p_n\}_{n < \omega}$ is a sequence of conditions with $p_{n+1} \leq_n p_n$ for all $n < \omega$ then $p_{\omega}:= \bigcap_{n < \omega} p_n$ is a condition again and $p_\omega \leq_n p_n$ for all $n < \omega$. In particular, the sequence of partial orders $\{\leq_n\}_{n < \omega}$ witnesses the aformentioned fact that $\mathbb{ML}$ satisfies Axiom A. Given an $\mathbb{ML}$-name $\dot{X}$ for an infinite set of natural numbers we say that a condition $p$ is {\em preprocessed for} $\dot{X}$ if for each $n < \omega$ and each $t \in p^n$ we have that $p_t$ decides $\dot{X} \cap \check{n}$. A straightforward fusion argument gives the following.

\begin{lemma}\label{preprocessed}
Let $\dot{X}$ be a $\mathbb{ML}$-name for an element of $[\omega]^\omega$. Then the set of conditions preprocessed for $\dot{X}$ is dense in $\mathbb{ML}$. 
\end{lemma}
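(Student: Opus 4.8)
The plan is to prove Lemma~\ref{preprocessed} by a standard fusion argument, building a decreasing $\leq_n$-sequence of conditions whose limit is preprocessed for $\dot X$. First I would fix an arbitrary $q \in \mathbb{ML}$; I need to find $p \leq q$ that is preprocessed for $\dot X$. The key observation is that at stage $n$ I only need to handle the finitely many nodes $t \in q^n$ (or rather the current condition's $n$-th level in the sense of the definition), and for each such $t$ I can shrink $q_t$ below $t$ to decide the finite set $\dot X \cap \check n$, without disturbing the part of the tree that already lives at levels $q^0, \dots, q^{n-1}$.

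The key steps, in order: (1) Set $p_0 = q$. (2) Given $p_n$, list the nodes of $p_n^{\,n}$ as $t_0, \dots, t_{k-1}$ (this is a finite set since each minimal $s$ with $|{\rm succ}_{p_n}(s)| > n$ has at most ... well, the set $p_n^{\,n}$ is finite because $p_n$ is finitely branching and, below any node, there is a minimal splitting-into-more-than-$n$ node along each branch, and these form an antichain with finitely many elements; I would remark this is where finite branching of $\mathbb{ML}$-trees is used). (3) Process $t_0, \dots, t_{k-1}$ one at a time: having handled $t_0, \dots, t_{j-1}$, pick $r \leq (p_n)_{t_j}$ deciding $\dot X \cap \check n$, and replace $(p_n)_{t_j}$ by $r$ inside the tree, leaving everything outside $(p_n)_{t_j}$ untouched; one must check the result is still an $\mathbb{ML}$-condition, which holds because $r$ is, and grafting an $\mathbb{ML}$-condition onto the node $t_j$ of a finitely branching tree all of whose other branches remain unmodified preserves the defining property (every node has an extension with $\geq m$ immediate successors for every $m$). (4) After processing all of $t_0, \dots, t_{k-1}$, call the resulting condition $p_{n+1}$; by construction $p_{n+1} \leq_n p_n$ since we never altered the tree at or below the splitting level defining $p_n^{\,n}$, hence $p_{n+1}^{\,n} = p_n^{\,n}$. (5) Let $p_\omega = \bigcap_{n<\omega} p_n$; by the fusion fact stated just before the lemma, $p_\omega \in \mathbb{ML}$ and $p_\omega \leq_n p_n$ for all $n$, so in particular $p_\omega^{\,n} = p_n^{\,n}$. (6) Finally verify $p_\omega$ is preprocessed for $\dot X$: given $n$ and $t \in p_\omega^{\,n} = p_n^{\,n}$, we arranged that $(p_n)_t$ decides $\dot X \cap \check n$, and since $(p_\omega)_t \leq (p_n)_t$, it decides it as well (the same way).

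The main obstacle I anticipate is purely bookkeeping: confirming that $p_n^{\,n}$ is finite and that the "graft a stronger condition below a fixed node" operation genuinely produces an $\mathbb{ML}$-condition and does not destroy the $\leq_n$ relation with the previous stages. One has to be slightly careful that strengthening $(p_n)_{t_j}$ could in principle change which nodes are "minimal with $|{\rm succ}| > m$" for $m < n$ along branches through $t_j$ — but this cannot happen, because $t_j \in p_n^{\,n}$ means $t_j$ is an immediate successor of a node $s$ minimal with more than $n$ successors, so $s$ and everything at or above the relevant splitting levels for all $m \leq n$ already lies in $p_n^{\,n} \cup \bigcup_{m<n} p_n^{\,m}$, i.e. strictly above $t_j$ in the tree order or incomparable to it, hence untouched. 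Once that is spelled out, the lemma follows; I would present it compactly, relegating the "straightforward fusion" to a sentence or two as the paper's phrasing suggests.
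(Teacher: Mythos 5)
Your argument is the ``straightforward fusion argument'' the paper has in mind (the paper gives no proof of this lemma), and steps (1), (3)--(6) are correct as written: grafting a deciding condition below each $t\in p_n^{\,n}$ does not disturb any node that is an initial segment of, or incomparable with, some $t\in p_n^{\,n}$, so the minimal nodes with more than $n$ successors and their successor sets are unchanged, giving $p_{n+1}\leq_n p_n$; the fusion then works as you say.

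The one step that does not hold up as justified is the finiteness of $p_n^{\,n}$. Your reason --- ``below any node, there is a minimal splitting-into-more-than-$n$ node along each branch'' --- is false for a general $\mathbb{ML}$-condition, and so is the finiteness itself (the paper's later parenthetical remark notwithstanding). For a counterexample, let $T$ contain the chain $\langle 1^k\rangle$ for all $k$, give each $\langle 1^k\rangle$ exactly the two successors $\langle 1^{k+1}\rangle$ and $\langle 1^k0\rangle$, and hang a copy of a fixed $\mathbb{ML}$-tree above each $\langle 1^k0\rangle$: this is a condition, the branch $1^\infty$ never reaches a node with three successors, and the minimal nodes with more than two successors form an infinite antichain (one inside each grafted copy). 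The K\H{o}nig-style argument you gesture at needs the antichain to be a bar, which it need not be. There are two standard repairs, either of which you should spell out: (i) first pass to the dense set of conditions $p$ in which, for every $k$, every branch contains a node with more than $k$ immediate successors (built by an easy recursion choosing fronts $F_k$ of nodes with more than $k$ successors); for such $p$ and all subsequent $p_n$ the sets $p_n^{\,n}$ really are finite and your ``one at a time'' processing goes through; or (ii) observe that the elements of $p_n^{\,n}$ form an antichain, so the subtrees above them are pairwise disjoint and all the grafts can be performed simultaneously even when there are infinitely many of them, after which your verification of $p_{n+1}\leq_n p_n$ and of the $\mathbb{ML}$-property of $p_{n+1}$ is unchanged. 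With either repair the proof is complete.
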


For a convenience,  for a condition $p \in \mathbb{ML}$ let us denote by ${\rm Split}_k(p)$ the collection of $s \in p$ which are minimal with $|{\rm succ}_p(s)| > k$. Note that this set is finite for any $k$ and $p$.

\begin{proof}[Proof of Theorem \ref{millerlite2}]
Assume $\CH$ and let $\mathcal I$ be a selective independent filter. Since $\mathbb{ML}$ is Cohen preserving the proof of Lemma \ref{Ramseypreserving} applies. Thus, in any generic extension of $V$ by $\mathbb{ML}$ we have that ${\rm fil}(\mathcal I)$ is Ramsey and is generated by the ground model elements. 

Fix a generic $G \subseteq \mathbb{ML}$. We want to show that $\mathcal I$ remains densely maximal in $V[G]$. Let $\dot{X}$ be a $\mathbb{ML}$-name for an infinite subset of $\omega$, so that in $V[G]$ %$\dot{X}^G$ is not in ${\rm fil}(\mathcal I)$ and 
for all $g \in {\rm FF}(\mathcal I)$ we have $\dot{X} \nsubseteq \omega \setminus \mathcal I^g$. Let $p \in G$ force this. Without loss of generality we may assume that $p$ is preprocessed for $\dot{X}$. For each $n < \omega$ and each node $t \in p^n$ let $Y_t = \{n \; | \; p_t \nVdash \check{n} \notin \dot{X}\}$. Note that for all $n < \omega$ and all nodes $t \in p^n$ we have that $p_t \forces \dot{X} \subseteq \check{Y}_t$. 

\begin{claim}
For all $n < \omega$ and all nodes $t \in p^n$ we have $Y_t \in {\rm fil}(\mathcal I)$.
\end{claim}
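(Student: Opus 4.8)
The claim asserts that for every $n < \omega$ and every node $t \in p^n$, the set $Y_t = \{n \mid p_t \nVdash \check n \notin \dot X\}$ lies in $\mathrm{fil}(\mathcal I)$. The plan is to argue by contradiction: suppose $Y_t \notin \mathrm{fil}(\mathcal I)$ for some fixed $n$ and $t \in p^n$. Since $\mathcal I$ is densely maximal in $V$, Lemma~\ref{lemma0} gives $P(\omega) = \mathrm{fil}(\mathcal I) \cup \langle \omega \setminus \mathcal I^g \mid g \in \mathrm{FF}(\mathcal I)\rangle_{\mathrm{dn}}$, so $Y_t \in \langle \omega \setminus \mathcal I^g \mid g \in \mathrm{FF}(\mathcal I)\rangle_{\mathrm{dn}}$, i.e. there is $g \in \mathrm{FF}(\mathcal I)$ with $Y_t \subseteq^* \omega \setminus \mathcal I^g$, hence (replacing $g$ by a stronger function if needed, or simply passing to cofinitely many points) $Y_t \cap \mathcal I^g$ is finite.

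**Deriving the contradiction.**

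Now I exploit that $p_t \forces \dot X \subseteq \check Y_t$, which was observed just before the claim. Work below $p_t$: we have $p_t \forces \dot X \cap \mathcal I^g \subseteq \check Y_t \cap \mathcal I^g$, and the right-hand side is a fixed finite ground-model set. So $p_t$ forces $\dot X \cap \mathcal I^g$ to be finite, equivalently $p_t \forces \dot X \subseteq^* \omega \setminus \mathcal I^g$, i.e. $p_t \forces \dot X \in \langle \omega \setminus \mathcal I^g \mid g \in \mathrm{FF}(\mathcal I)\rangle_{\mathrm{dn}}$ — in particular $p_t \forces \dot X \subseteq^* \omega \setminus \mathcal I^g$. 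But $p_t \leq p$ extends $p$, and $p \in G$ forces that for all $g \in \mathrm{FF}(\mathcal I)$, $\dot X \not\subseteq^* \omega \setminus \mathcal I^g$ (this is the hypothesis on $\dot X$ witnessing that $\mathcal I \cup \{\dot X\}$ would be independent over $g' = \emptyset$; strictly, the hypothesis says $\dot X \not\subseteq \omega \setminus \mathcal I^g$, but for infinite independent families "finite'' and "empty'' are interchangeable, as noted after Definition~\ref{density.filter}, and $p_t$ forcing $\dot X \cap \mathcal I^g$ finite contradicts this). Pick any generic $H \ni p_t$; in $V[H]$ we get both $\dot X^H \cap \mathcal I^g$ finite and $\dot X^H \cap \mathcal I^g$ infinite, a contradiction. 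Hence $Y_t \in \mathrm{fil}(\mathcal I)$.

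**The main obstacle.** The delicate point is the bookkeeping between "$\dot X \not\subseteq \omega \setminus \mathcal I^g$'' (the stated hypothesis on $\dot X$, an assertion about a single fixed $g$) and "$\dot X \cap \mathcal I^g$ is infinite'' (what one needs to contradict finiteness). For an infinite independent family these are equivalent modulo the standard "finite vs.\ empty'' remark, and one should also be careful that the $g$ extracted from $Y_t \subseteq^* \omega \setminus \mathcal I^g$ may need to be taken with $g' = \emptyset$ — but dense maximality via Lemma~\ref{lemma0} already delivers a single $g \in \mathrm{FF}(\mathcal I)$ with $Y_t$ almost disjoint from $\mathcal I^g$, which suffices. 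A secondary subtlety is that $Y_t$ is a genuinely ground-model set (its definition $\{n \mid p_t \nVdash \check n \notin \dot X\}$ is computed in $V$ from the name $\dot X$ and the condition $p_t$), so Lemma~\ref{lemma0} applies to it directly; this is exactly the role played by $p$ being preprocessed for $\dot X$, since preprocessing guarantees $p_t$ decides $\dot X \cap \check n$ and hence makes $Y_t$ well-behaved, but the argument above in fact only uses $p_t \forces \dot X \subseteq \check Y_t$.
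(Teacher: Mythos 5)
Your proof is correct and follows essentially the same route as the paper's: assume $Y_t \notin \mathrm{fil}(\mathcal I)$, use dense maximality (via Lemma~\ref{lemma0}) to get $Y_t \subseteq^* \omega \setminus \mathcal I^g$, and combine with $p_t \forces \dot X \subseteq \check Y_t$ to contradict the choice of $p$. Your extra bookkeeping on $\subseteq$ versus $\subseteq^*$ (and the finite-versus-empty convention) is exactly the point the paper glosses over, so no concerns.
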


\begin{proof}
Otherwise there are $n < \omega$ and $t \in p^n$ so that $Y_t \subseteq \omega \setminus \mathcal I^g$ for some $g \in {\rm FF}(\mathcal I)$. However $p_t \forces \dot{X} \subseteq \check{Y}_t$ and so $p_t \forces \dot{X} \subseteq \omega \setminus \mathcal I^g$, which  contradicts the choice of $p$.
\end{proof}

Since ${\rm fil}(\mathcal I)$ is a $P$-filter generated by ground model elements, there is a $C \in {\rm fil}(\mathcal I) \cap V$ so that $C \subseteq^* Y_t$ for all $t \in \bigcup_{n < \omega} p^n$. Let $f \in \baire$ be a strictly increasing function such that for all $n < \omega$ we have $C \setminus f(n) \subseteq \bigcap \{Y_t \; | \; t \in p^j, \; j \leq n + 2\}$. The following is proved in \cite{Sh92}, as well as \cite[Lemma 3.15]{CFGS21} and we include it for completeness.

\begin{claim}
There is $C^* \subseteq C$, $C^* \in {\rm fil}(\mathcal I) \cap V$, so that if $\{k_n \; | \; n < \omega\}$ is the strictly increasing enumeration of $C^*$, then $f(k_n) < k_{n+1}$ for all $n < \omega$ and $f(1) < k_1$.
\end{claim}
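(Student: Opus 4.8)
The plan is to obtain $C^*$ by thinning $C$, using that $\mathrm{fil}(\mathcal{I})$ is a $Q$-filter -- it is Ramsey since $\mathcal{I}$ is selective, hence in particular a $Q$-set -- together with the fact that it is a proper filter, so closed under finite intersections and consisting only of infinite sets (it extends the Fr\'echet filter). Since $C$, $f$ and the $Y_t$ are ground model objects, the whole argument takes place inside $V$. The only normalization needed is harmless: a strictly increasing $f\in\baire$ automatically satisfies $f(n)\geq n$ for all $n$.

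First I would build a sufficiently fast partition of $\omega$ into finite intervals: set $m_0=0$ and $m_{k+1}=f(m_k)+1$, so $(m_k)_{k<\omega}$ is strictly increasing, and put $I_k=[m_k,m_{k+1})$. A direct computation shows $f[I_k]\subseteq[0,m_{k+2})$ for every $k$, since $x<m_{k+1}$ gives $f(x)\leq f(m_{k+1}-1)<f(m_{k+1})+1=m_{k+2}$ by monotonicity. Next I would pass to two coarser partitions of $\omega$ into finite sets, each obtained by gluing consecutive $I_k$'s in pairs but with a one-step offset: $\mathcal{P}_0$ with blocks $I_{2l}\cup I_{2l+1}$ ($l<\omega$), and $\mathcal{P}_1$ with blocks $I_0$ and $I_{2l-1}\cup I_{2l}$ ($l\geq 1$). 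Using the $Q$-filter property of $\mathrm{fil}(\mathcal{I})$ I would choose semiselectors $B_0,B_1\in\mathrm{fil}(\mathcal{I})$ for $\mathcal{P}_0$ and $\mathcal{P}_1$, and set $C^*=B_0\cap B_1\cap C$; then $C^*\subseteq C$, $C^*\in\mathrm{fil}(\mathcal{I})\cap V$ as a finite intersection of filter members, and $C^*$ is infinite.

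The key observation -- and the reason two interlocking partitions are used rather than one -- is that for each $k$ the pair $I_k\cup I_{k+1}$ lies inside a single block of $\mathcal{P}_0$ (if $k$ is even) or of $\mathcal{P}_1$ (if $k$ is odd), so $C^*$ meets $I_k\cup I_{k+1}$ in at most one point. Hence, if $\{k_n\mid n<\omega\}$ is the increasing enumeration of $C^*$ and $j_n$ is the index with $k_n\in I_{j_n}$, then $j_{n+1}\geq j_n+2$, and therefore
\[
f(k_n)<m_{j_n+2}\leq m_{j_{n+1}}\leq k_{n+1},
\]
which is precisely $f(k_n)<k_{n+1}$; likewise $f(1)\leq f(m_1)=m_2-1<m_2\leq m_{j_1}\leq k_1$ (using $1\leq m_1$ and $j_1\geq 2$), giving $f(1)<k_1$. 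The main obstacle is exactly this last point: a semiselector of a single interval partition can still select two points from adjacent intervals $I_k,I_{k+1}$, and a point high in $I_k$ may have $f$-image above $\min I_{k+1}$, so one application of the $Q$-filter property does not suffice; intersecting two offset semiselectors forces the chosen points to be at least two intervals apart, which is what makes the interval bookkeeping close. Everything else is routine verification.
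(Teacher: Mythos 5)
Your proof is correct and follows essentially the same strategy as the paper: both apply the $Q$-set property of $\mathrm{fil}(\mathcal I)$ twice (the paper uses one coarse interval partition followed by a semiselector for a two-element-classes equivalence relation, you use two offset interval partitions and intersect the semiselectors) together with closure of the filter under finite intersections. The arithmetic with the $m_k$'s and the indices $j_n$ checks out, and you even get $f(1)<k_1$ without the finite modification the paper invokes at the end.
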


\begin{proof}
The claim follows from the fact that ${\rm fil}(\mathcal I)$ is a $Q$-filter\footnote{In fact a family $\mathcal F \subseteq [\omega]^\omega$ is a $Q$-filter if and only if for each increasing $f \in \baire$ there is a $C^* = \{k_n \; | \; n < \omega\} \in \mathcal F$ such that $f(k_n) < k_{n+1}$, see \cite[Lemma 3.15]{CFGS21}.}. Indeed.  Inductively find a sequence $\{n_l\}_{l \in \omega}$ so that $n_0 = 0$ and $$n_{l+1} = {\rm min}\{n \; | \; n_l < n {\rm \, and \, for \, all} \, m < n_l \, f(m) < n\}.$$ Consider the interval partition $\mathcal E_0 = \{[n_{3l}, n_{3l + 3})\}_{l \in \omega}$. Since ${\rm fil}(\mathcal I)$ is a $Q$-set and a filter there is a $C_1 \subseteq C$ so that for all $l < \omega$ we have $|C_1 \cap [n_{3l}, n_{3l + 3})| \leq 1$. Now define an equivalence relation $\mathcal E_1$ on $\omega$ by $$m \equiv_{\mathcal E_1} k \, {\rm iff} \, m = k \lor m, k \in C_1 \land (m < k \leq f(m) \lor k < m \leq f(k)).$$
In words, this says that every element of $\omega \setminus C_1$ is in their own equivalence class and distinct elements $m, k \in C_1$ are $\mathcal E_1$-equivalent just in case applying $f$ to the smaller one is greater or equal to the bigger one. Every $\mathcal E_1$ equivalence class has at most two members. To see this, suppose $ m_1 < m_2 < m_3 \in C_1$ were all in the same equivalence class. By definition of $\mathcal E_1$ we have $m_1 <  m_2 < m_3 \leq f(m_1)$. However, since $C_1$ is a semiselector for the interval partition $\mathcal E_0$ there are distinct $l_1 < l_2 < l_3$ so that for all $i \in \{1, 2, 3\}$ we have $m_i \in [n_{3l_i}, n_{3l_{i+1}})$. Thus we get $m_1 < n_{3l_2} \leq m_2  < n_{3l_3} \leq m_3 \leq f(m_1)$ but by the definition of the $n_l$ sequence we also have $f(m_1) \leq n_{3l_2 + 1} < n_{3l_3}$ which is a contradiction.
%\footnote{The argument in the last paragraph can be found also %in~\cite{Sh92} and~\cite{CFGS21}.} 

Now let $C_2 \subseteq C_1$ be a semiselector for $\mathcal E_1$ in ${\rm fil}(\mathcal I)$. Without loss of generality $0 \in C_2$. Let $\{k_n\}$ be an increasing enumeration of $C_2$. For all $n < n'$ we have that $n$ and $n'$ are not in the same $\mathcal E_1$ equivalence class and therefore $f(n) < n'$. As such $C_2 = C^*$ is as needed.

For the final point note that ${\rm fil}(\mathcal I)$ is closed under finite changes to elements so we can augment $C^*$ to get $f(1) < k_1$ as needed.
\end{proof}

We will find a $q \leq p$ forcing that $C^* \subseteq \dot{X}$ and so forcing that $\dot{X}\in{\rm fil}(\mathcal I)$ as desired. We obtain $q$ as a fusion of a sequence. Let $t^* = {\rm Stem}(p)$ be the unique element of ${\rm Split}_0(p)$ and let $p_0 = p = p_{t^*}$. Now for each $(t^*)^\frown i \in {\rm succ}_p(t^*)$ let $w(t^*, i) \in p_0$ be an element of $p_0$ extending $(t^* )^\frown i$. Since $k_1 > f(1)$ we have that $k_1 \in \bigcap \{Y_{w(t^*, i)} \; | \; (t^*)^\frown i \in {\rm succ}_p(t^*)\}$. This means that for each $(t^*)^\frown i \in {\rm succ}_p(t^*)$ there is a $w'(t^*, i) \in p^{k_1 + 1}_0$ extending $w(t^*, i)$ which forces that $k_1 \in \dot{X}$, since $p_0$ is preprocessed. Let $p_1 = \bigcup_{(t^*)^\frown i \in {\rm succ}_p(t^*)} p_{w'(t^*, i)}$. Note that $p_1 \leq_0 p_0$ and forces that $k_1 \in \dot{X}$. Also note that $p^1_1\subseteq p^{k_1 + 1}_0$ and, by construction, for each $m$ we have that $p^m_1 \subseteq p^{k_1 + m}_0$. 
Proceed inductively defining $p_{n+1}$ as follows. Assume $p_n$ has been defined and that for all $m < \omega$ we have $p^{n+m}_n\subseteq p^{k_n + m}_0$. Observe that $k_{n+1} \in \bigcap\{Y_t \; | \; t \in p^n_n\}$, since $k_{n+1} > f(k_n)$. Hence we can find for each $t \in {\rm Split}_n(p_n)$ and each $t^\frown i \in {\rm succ}_{p_n}(t)$ a $w(t, i) \in p^{k_{n+1} + 1}_0$ in $p_n$ which contains $t^\frown i$ so that $(p_n)_{w(t, i)} \forces \check{k}_{n+1} \in \dot{X}$. Let $p_{n+1} = \bigcup_{t \in {\rm Split}_n(p_n)} \bigcup_{t^\frown i \in {\rm succ}_{p_n}(t)} (p_n)_{w(t, i)}$. Finally, let $q$ be the fusion of the 
sequence $\langle p_n:n\in\omega\rangle$. Then $q \forces \check{C}^* \subseteq \dot{X}$ and so 
$q\forces\dot{X}\in{\rm fil}(\mathcal I)$, as desired.
\end{proof}

In \cite{DefMIF} it is shown that if $V=L$ then there is a co-analytic selective independent family (which is the best possible complexity by \cite{Millerpi11}). Thus, by working over $L$ we obtain:
\begin{theorem}\label{thm.miller.lite} 
If $V=L$ then in the model obtained by a countable support iteration of $\mathbb{ML}$ of length $\omega_2$ the witness to $\mfi = \aleph_1$ is co-analytic.
\end{theorem}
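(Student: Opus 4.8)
The plan is to combine the machinery already assembled in the paper: namely, the preservation theorem (Theorem~\ref{mainthm11}), the fact that $\mathbb{ML}$ is proper and Cohen preserving (which follows from the Sacks property via Lemma~\ref{Sacks.implies.Cohen}), the preservation of dense maximality of selective independent families under $\mathbb{ML}$ (Theorem~\ref{millerlite2}), and the definability result about $L$ from \cite{DefMIF}. The only genuinely new ingredient here is a definability bookkeeping argument: one must check that the construction witnessing $\mfi = \aleph_1$ can be carried out so that the resulting maximal independent family is $\Pi^1_1$. Since this last point is the only thing not already a formal consequence of the stated results, I would organize the proof around it.

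First I would recall that \cite{DefMIF} shows that in $L$ there is a $\Pi^1_1$ selective independent family $\mathcal I$; fix such an $\mathcal I$. Next, perform the countable support iteration $\langle \P_\alpha, \dot{\Q}_\alpha \mid \alpha < \omega_2\rangle$ of Miller lite forcing over $L$. By Theorem~\ref{millerlite2} (using that $\CH$ holds in $L$ and is preserved at each stage of length $<\omega_2$ of a countable support iteration of proper, $\baire$-bounding forcings with the $\aleph_2$-c.c.) and by the fact that $\mathbb{ML}$ is proper and Cohen preserving, the hypotheses of Theorem~\ref{mainthm11} are met for $\delta = \omega_2$. Hence $\mathcal I$ remains selective, and in particular maximal, in the final model $L^{\P_{\omega_2}}$. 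Since the iteration is proper and $\baire$-bounding of length $\omega_2$ with the appropriate chain condition, it adds reals cofinally and $\mfc = \aleph_2$ there, while $\mathcal I$ has size $\aleph_1$ (it had size $\aleph_1 = \aleph_1^L$ and no reals are added to it as it stays maximal); so $\mfi = \aleph_1 < \mfc$ in $L^{\P_{\omega_2}}$, witnessed by $\mathcal I$.

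Finally, for the complexity claim, I would argue that $\mathcal I$, being a $\Pi^1_1$ subset of $[\omega]^\omega$ in the ground model $L$, remains $\Pi^1_1$ (with the same defining formula, by Mostowski absoluteness of $\Pi^1_1$ statements) in the extension: the point is that $\mathcal I$ as a \emph{set of reals} is definable by a $\Pi^1_1$ formula with no parameters (or with a real parameter coding a well-order, which is available since $V=L$ is used only to get the definable family in the first place), and this definition is upward absolute in the sense that the extension's $\mathcal I$ is exactly the set of reals satisfying that formula — here one uses that $\Pi^1_1$ is $\Pi^1_1$-absolute and that the construction in \cite{DefMIF} produces a family whose defining predicate is genuinely $\Pi^1_1$, not merely $\Sigma^1_2$. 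The main obstacle, and the point deserving the most care, is precisely this last absoluteness/definability check: one must verify that the $\Pi^1_1$ family from \cite{DefMIF} does not acquire new members (nor lose old ones) when passing to $L^{\P_{\omega_2}}$, i.e. that the formula defining it is evaluated the same way in both models. This is where invoking Mostowski (Shoenfield) absoluteness for $\Pi^1_1$ — together with the observation that $\mathcal I$ is already maximal in the extension, so it cannot have been enlarged — closes the gap; the remaining verification that the iteration forces $\mfc = \aleph_2$ and preserves $\CH$ along the way is routine and I would state it without detailed proof, citing standard iteration theory for proper $\baire$-bounding forcings.
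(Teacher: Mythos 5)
Your proposal follows exactly the route the paper intends (the paper states this theorem as an immediate consequence of Theorem~\ref{millerlite2}, Theorem~\ref{mainthm11} and the $\Pi^1_1$ selective independent family of \cite{DefMIF} in $L$, with no further proof), and the outline is correct. One sub-argument should be fixed, though: maximality of $\mathcal I$ in the extension does \emph{not} show that the $\Pi^1_1$ definition ``cannot have been enlarged.'' The implication runs the other way: if a new real satisfied the defining formula, then the co-analytically defined set would properly contain a maximal independent family and hence fail to be independent, i.e.\ fail to be a witness at all. So the burden of showing that no new reals enter the definition rests entirely on the structure of the \cite{DefMIF} construction --- the defining $\Pi^1_1$ formula forces any real satisfying it to be one of the $\aleph_1$ many sets constructed along the canonical $\Sigma^1_2$ well-order of $L$, and Shoenfield/Mostowski absoluteness (together with preservation of $\omega_1$ by properness) then gives that the formula defines exactly the ground-model family in $L^{\P_{\omega_2}}$ --- and not on the maximality observation. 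With that correction your argument is the paper's.
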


\section{Conclusion and Open Questions}
The foregoing leaves a number of interesting questions open:

%We end with these. First we ask about the utility of the specific %aspects of  Theorem \ref{mainthm11}.
\begin{question}
Is Cohen preservation needed in Theorem~\ref{mainthm11}? In particular, is it possible that preserving selective independence implies Cohen preservation?
\end{question}

%We can also ask about the ``selectivity" part.
\begin{question}
If $\mathcal I$ is densely maximal and not necessarily selective is it indestructible by some reasonable class of forcing notions?
\end{question}

%Awkwardly, as far as this question goes, we do not even know how to %answer the following.
%\begin{question}
%If $\mathcal I$ is densely maximal is it selective? If ${\rm %fil}(\mathcal I)$ is Ramsey is $\mathcal I$ densely maximal?
%\end{question}

%Of interest remains the following:
\begin{question}
Is it consistent there are no selective independent families? Is it possible their (non)existence follows from some cardinal characteristic (in)equality? Are there selective independent families in the Silver model?
\end{question}

%Finally we note that Main Theorem \ref{mainthm1} follows a long line of %recent preservation theorems for ``extremal sets of reals with good %forcing properties" (see, e.g. \cite{BFS21} for a disussion of this %idea). Can all these theorems be re-united into some large meta theorem?
%\begin{question}
%Is there a general metatheorem for preserving strong witnesses to %%extremal sets of reals by forcing?
%\end{question}

\end{document}